\let\emph\relax
\DeclareTextFontCommand{\emph}{\bfseries}
\DeclareRobustCommand{\rvdots}{%
	\vbox{
		\baselineskip4\p@\lineskiplimit\z@
		\kern-\p@
		\hbox{.}\hbox{.}\hbox{.}
}}
\newcommand\at{\text{{\fontfamily{qbk}\selectfont\small @}}}
\newcommand\acts{\cdot}
\setlist{nosep}
\definecolor{darkblue}{rgb}{0.0, 0.0, 0.55}
\newcommand{\Td}{\mathcal{T}_d}
\newcommand{\AutTd}{\mathrm{Aut}(\Td)}
\newcommand{\FinTd}{\mathrm{Fin}(\Td)}
\newcommand{\DirTd}{\mathrm{Dir}(\Td)}
\newcommand{\BTd}{\mathcal{B}(\Td)}
\newcommand{\WP}{\mathsf{WP}}
\renewcommand{\leq}{\leqslant}
\renewcommand{\geq}{\geqslant}
\newcommand\CompDepth{\mathrm{DDepth}}
\newcommand\Decorate{\mathrm{Dec}}
\newcommand\Reg{\mathrm{Reg}}
\newtheorem{theorem}{Theorem}[section]
\newtheorem{proposition}[theorem]{Proposition}
\newtheorem{lemma}[theorem]{Lemma}
\newtheorem{definition}[theorem]{Definition}
\newtheorem*{definition*}{Definition}
\newtheorem{question}[theorem]{Question}
\newtheorem{remark}[theorem]{Remark}
\newtheorem{notation}[theorem]{Notation}
\renewcommand{\star}{^\ast}
\newcommand{\N}{\mathbb{N}}
\newcommand{\Stab}{\mathrm{Stab}}
\newcommand{\IfRestatedTF}[2]{\ifthmt@thisistheone #2\else #1\fi}
\tikzset{initial text={}}
\tikzstyle{initial by arrow}=   [after node path=
\begin{document}

\title{On the ET0L subgroup membership problem in bounded automata groups}
\date{}
\author{Alex Bishop%
\footnote{Université de Genève, Genève, Switzerland
\texttt{alexbishop1234@gmail.com}}%
\and Daniele D'Angeli %
\footnote{Universit\`a Niccolo Cusano, Rome, Italy \texttt{daniele.dangeli@unicusano.it}}
\and Francesco Matucci%
\footnote{Universit\`{a} %
di Milano--Bicocca, Milan, Italy.
\texttt{francesco.matucci@unimib.it}}
\and
Tatiana Nagnibeda%
\footnote{Université de Genève, Genève, Switzerland
\texttt{tatiana.smirnova-nagnibeda@unige.ch}}
\and Davide Perego%
\footnote{Université de Genève, Genève, Switzerland
\texttt{davide.perego@unige.ch}}
\and Emanuele Rodaro%
\footnote{Politecnico di Milano, Milan, Italy.
\texttt{emanuele.rodaro@polimi.it}}}

\maketitle

\begin{abstract}
We are interested in the subgroup membership problem in groups acting on rooted $d$-regular trees and a natural class of subgroups, the stabilisers of infinite rays emanating from the root. These rays, which can also be viewed as infinite words in the alphabet with $d$ letters, form the boundary of the tree. Stabilisers of infinite rays are not finitely generated in general, but if the ray is computable, the membership problem is well posed and solvable. The main result of the paper is that, for bounded automata groups, the membership problem in the stabiliser of any ray that is eventually periodic as an infinite word, forms an ET0L language that is constructable.
The result is optimal in the sense that, in general, the membership problem for the stabiliser of an infinite ray in a bounded automata group cannot be context-free. As an application, we give a recursive formula for the associated generating function, aka the Green function, on the corresponding infinite Schreier graph.
\end{abstract}

\section{Introduction}\label{sec:introduction}

In an influential paper in 1911 \cite{dehn1911}, Max Dehn formulated three decision problems for finitely generated groups, the most famous of them being the \emph{word problem}.
For a finitely generated group $G$ with a finite symmetric generating set $X$, the word problem asks if we can decide, given a word $w\in X^*$ in the free monoid over the alphabet $X$, whether $\overline{w}$, the natural projection of the word $w$ to the group is the trivial element (in other words, belongs to the trivial subgroup).
A natural extension of this problem is the \emph{subgroup membership problem}, which asks, given a word $w\in X^*$ and a description of a subgroup $H\leq G$, whether $\overline{w}$ is an element of $H$.
For $G, X$ and $H\leq G$, denote by $\WP(G,X,H)$ the set of all words $w\in X^*$ for which $\overline{w}$ is an element of the subgroup $H$.
We can then ask for which classes of finitely generated groups and subgroups the membership to $\WP(G,X,H)$ is uniformly computationally decidable.  Of course, for the problem to be well posed one needs the subgroups in question to have a computable description. The most popular case is to consider the membership problem in finitely generated subgroups, which, given finitely many elements $g_1,g_2,...,g_k\in G$ and a word $w\in X^*$, asks whether $\overline{w}$ belongs to the subgroup  $H = \left\langle g_1,g_2,...,g_k \right\rangle$.
Decidability of the subgroup membership problem (mostly for finitely generated subgroups) for various classes of groups has been studied over the years, and we refer the reader to the recent survey \cite{Lohrey} for an excellent account of the state-of-the-art of the subject. 

In this paper, we focus on one interesting aspect of the subgroup membership problem, that is, to describe the \emph{formal language} $\WP(G,X,H)$ for given $G$, $X$ and $H$. The word problem $\WP(G,X)$ can be thought of as the language formed by the words read along the closed paths in the Cayley graph of $(G, X)$ where the edges are oriented and labelled by letters from $X$ and the paths are based at the vertex representing the identity element. Similarly, the 
subgroup membership problem $\WP(G,X,H)$ consists of the words read along the closed paths in the Schreier graph of $(G, X, H)$ based at the vertex representing the trivial coset $H$ in the Schreier graph. Such languages have been studied extensively for the case when H is trivial (the word
problem), but practically nothing is known for the case of non-trivial H.
Anisimov proved in \cite{anisimov1971} that the word problem constitutes a regular language if and only if the group is finite, and this result readily generalises to arbitrary $H$: the language $\WP(G,X,H)$ is regular if and only if $H$ is a subgroup of finite index~\cite[Proposition~6.1]{sakarovitch2009elements}.
  The famous theorem of Muller and Schupp  \cite{MullerSchupp} tells us that the word problem is a context-free language if and only if the group is virtually free. 
  
  Regular and context-free languages constitute two smallest classes in Chomsky's hierarchy of formal languages.  The next class is that of context-sensitive languages, but in recent years other, intermediate classes  came into play. For example, the \emph{ET0L languages}  introduced by Rozenberg in his 1973 paper  \cite{Rozenberg1973} recently became popular in geometric group theory (see, e.g., \cite{ciobanu2018,Bishop2019,diekert2017,ciobanu2019,evetts2020,diekert2001}).  Recall that the regular (respectively, context-free) languages can be characterised as those that are recognised by finite-state (respectively, pushdown) automata. Analogously, ET0L languages are exactly those that can be recognised by a check-stack push-down automaton~\cite{leeuwen1976}.

  It is conjectured (Conjecture~8.1 in~\cite{ciobanu2018}) that a group has an ET0L word problem if and only if it is virtually free. The main aim of this paper is to present a family of groups and subgroups where the class of subgroup membership problem is exactly ET0L.

We consider  \emph{groups generated by automorphisms of  regular rooted trees}. For a fixed integer $d\geq 2$, we write $\Td$ for the $d$-regular rooted tree. The vertices of $\Td$ can be identified with words in the free monoid $C^*$ in an alphabet $C = \{c_1,c_2,...,c_d\}$ with $d$ letters. The one-sided infinite words in the alphabet $C$ represent infinite rays emanating from the root that form the boundary $\partial\mathcal{T}_d$ of the tree. 
 Let $G \leq \AutTd$ be a finitely generated group of automorphisms of $\Td$. Its action on the tree by automorphisms extends by continuity to an action  on the boundary of the tree by homeomorphisms.
 
Among the subgroups of $G$, a special role is played by \emph{point stabilisers} for this action. The stabilisers of the tree vertices are subgroups of finite index and hence the subgroup membership problem in them is a regular language. From now on we concentrate on the membership problem in stabilisers of infinite rays. We denote by $\Stab(\eta)$ the stabiliser of the infinite ray $\eta = c_{i_1} c_{i_2} c_{i_3} \cdots \in C^\omega$, that is,
\begin{equation}\tag{$\diamond$}\label{eq:stab}
    \Stab(\eta) = \bigcap_{k=1}^\infty \Stab(c_{i_1} c_{i_2}\cdots c_{i_k})
\end{equation}
where each $\Stab(c_{i_1} c_{i_2} \cdots c_{i_k})$ is the stabiliser in $G$ of the vertex of the tree $\Td$ corresponding to the word $c_{i_1}c_{i_2}\cdots c_{i_k}$.

We focus on \emph{automaton automorphisms} of $\Td$ (see \cref{def:automata-automorphism}) which can be completely described by a finite amount of data.

This allows us to study the subgroup membership problem uniformly over the class of groups generated by finitely many automata automorphisms. This is a very interesting class of groups that includes many important examples, such as groups of intermediate growth, infinite torsion groups, non-elementary amenable groups, and more \cite{Bondarenko2007thesis,Nekrashevych2005book}.

 We now turn to the subgroup membership problem to stabilisers of infinite rays in finitely generated automata groups. These stabiliser subgroups are not finitely generated in many interesting examples, but we can use the infinite ray $\eta$ as an input of our algorithmic problem, and we require $\eta$ to be computable.

To find groups and subgroups with ET0L subgroup membership problem, we further specialise to groups  generated by \emph{bounded automaton automorphisms}\footnote{Remark: given an automaton automorphism, it is computable to check if it is bounded} (see \cref{def:bounded-automata-automorphism}) and stabilisers of rays that are  \emph{eventually periodic}, that is, of the form $\eta = ab^\omega$ with $a,b\in C^*$.

In the main result of the paper, \cref{thm:main}, we show that given a finite set of bounded automaton automorphisms $X$ of the rooted tree $\Td = C^*$, and words $a,b\in C^*$,  the subgroup membership problem $\WP(G,X,\Stab(\eta))$, with $G = \left\langle X \right\rangle$ and $\eta=ab^\omega$, is an ET0L language as described in \cref{sec:et0l-language}.
In particular, we show that we can effectively compute a description of such an ET0L language by an \emph{unambiguous limiting ET0L grammar} (see \cref{def:limiting-et0l}).
Describing an ET0L language in this way then enables us to apply \cref{thm:gfun} to find a description of its generating function.

\begin{restatable*}{theorem}{TheoremMain}\label{thm:main}
Suppose that we are given a finite symmetric set $X$ of bounded automaton automorphisms acting on the tree $\Td = C\star$, and words $a,b\in C^*$.
Then, we can effectively compute---uniformly over all $X$, $a$ and $b$---an ET0L grammar which generates the language $\WP(G,X,\Stab(\eta))$ with $G = \left\langle X \right\rangle$ and $\eta=ab^\omega$, and an ET0L grammar which generates the complement of this language, i.e., $X^*\setminus \WP(G,X,\Stab(\eta))$.
Moreover, in both cases the grammars are unambiguous limiting.
\end{restatable*}

In language theory, a frequent question is where a specific class of formal languages fits within the Chomsky hierarchy. This hierarchy is a way to classify formal grammars and the languages they generate based on their expressive power. Formally, expressive power can be understood as the ability of the grammar to enforce increasingly complex dependencies between symbols in a string — from local constraints in regular languages, to nested structures in context-free languages, and ultimately to arbitrary computable relations in recursively enumerable languages.
It is well known that the class of ET0L languages lies strictly between the families of context-free and context-sensitive languages (see Theorem~19 of \cite{Rozenberg1973}).
It is hence natural to ask whether our main result can be strengthened and whether the membership problem into stabilisers of infinite periodic rays belongs in fact to the class of context-free languages.
In \cref{sec:not-CF}, we work with Schreier graphs of the corresponding subgroups to provide obstructions to the membership problem being context-free.
In particular, we show that any stabiliser of an infinite ray $\eta$ in the first Grigorchuk group and other key examples of bounded automata groups has a non-context-free language $\WP(G,X,\Stab(\eta))$. This demonstrates that  \cref{thm:main} cannot be improved to the class of context-free languages.

The fact that the membership problems described in \cref{thm:main} are unambiguous limiting ET0L languages implies that their generating functions are computable. More precisely, we  compute a recurrence relation for the generating functions of such unambiguous limiting ET0L languages in \cref{thm:gfun}.
For a background on the notation used in \cref{thm:gfun}, see \cref{sec:gfun}.

\begin{restatable*}{theorem}{TheoremGFun}\label{thm:gfun}
Let $L\subseteq \Sigma\star$ be an unambiguous limiting ET0L language.
Then, it is computable to find a description of the generating function of $L$ as
\[
    f(z) = g(r_1(z), r_2(z),...,r_k(z))
\]
where each $r_i(z)\in \mathbb{N}[[z]]$ is a rational power series, and $g(x_1,x_2,...,x_k)$ is a formal power series defined as
\[
    g(x_1,x_2,...,x_k)
    =
    \lim_{n\to\infty} g_n(x_1,x_2,...,x_k)
\]
where $g_0(x_1,x_2,...,x_k)\in \mathbb N[[x_1,...,x_k]]$ is a rational power series, and
\[
    g_{n+1}(x_1,x_2,...,x_k) = g_{n}(q_1(x_1,x_2,...,x_k),q_2(x_1,x_2,...,x_k),
    ...,q_k(x_1,x_2,...,x_k))
\]
for each $n \geq 0$ where each $q_i(x_1,...,x_k) \in \mathbb{N}[[x_1,...,x_k]]$ is a rational series depending on $g_n$. In the above $k$ is a constant that depends on the grammar.
\end{restatable*}

Suppose that $X$ is a finite symmetric set of automaton automorphisms which generates the group $G = \left\langle X \right\rangle$, and that $\eta \in \partial \Td$,
then the \emph{Schreier graph} $\Gamma_\eta$ of the subgroup $\Stab(\eta)$  is formed by the vertex set $\eta\cdot G = \{\eta\cdot g \mid g\in G\}$ with a labelled edge $\zeta\to^x (\zeta\acts x)$ for each vertex $\zeta$ and each generator $x\in X$.

\Cref{thm:gfun} can be used to calculate the generating functions of closed walks on such Schreier graphs.
Indeed, \cref{thm:gfun} allows us to find a recurrence for the generating function $f(z)$ of the language $\mathrm{WP}(G,X,\mathrm{Stab}(\eta))$, as in \cref{thm:main}. As mentioned above, $\mathrm{WP}(G,X,\mathrm{Stab}(\eta))$ consists exactly of  all words read along the  paths in $\Gamma_\eta$ that begin and end at the vertex $\eta$.
This generating function is closely related to the Green function of the simple random walk on $\Gamma_\eta$:
\[
    G(z)
    =
    f(z/|X|) .
\]
Recall that, for a random walk on the graph with the starting point $\eta$, its Green function  is defined as
\[
    G(z)
    =
    \sum_{n=0}^\infty p^{(n)}(\eta,\eta)\, z^n
\]
where $p^{(n)} (\eta,\eta)$ is the probability for the random walk to come back to $\eta$ after exactly $n$ steps. The random walk is called simple if its transition probabilities are uniform on neighbouring vertices. 
Our \cref{thm:gfun} can be extended to calculate the Green function for the more general case of random walks with non-uniform transition probabilities. Beside being an important characteristic of the random walk, the Green function and its complexity is valuable in particular in the study of spectra of graphs (see for example \cite{Mohar1989}).

\medskip

A corollary to \cref{thm:main}
 is that the membership problem to $\Stab(G,X,\Stab(\eta))$ is decidable uniformly over all bounded automata group $G$ and uniformly over all eventually periodic rays $\eta$. Indeed, the membership to a fixed ET0L language is known to be decidable in space complexity $O(n)$ and time complexity $O(n^2)$ where $n$ is the length of
the input word (see Lemma 2.1 in \cite{leeuwen1976}); and it is possible to modify this procedure to make it uniform. 
However, uniform decidability of the subgroup membership problem for stabilisers of infinite rays follows from simpler arguments. 
In \cref{prop:solvable,prop:solvableb}, we address this decision problem where $\eta$ is a computable ray which is periodic or non-eventually periodic, respectively. 
The non-periodic case leads to a so-called \emph{promise problem}, and
in \cref{prop:unsolvable} we see that it cannot be extended to a decision procedure.
We thank the anonymous reviewer for their suggested proof sketch of \cref{prop:solvable} which we give in \cref{sec:bounded-automata-groups}.

\begin{restatable*}{proposition}{PropSolvability}\label{prop:solvable}
Suppose we are given as input
\begin{enumerate}
    \item 
    a finite symmetric set of automaton automorphisms $X$ of the tree $\Td = C^*$;
    \item a word $w\in X^*$ over the generating set $X$; and
    \item\label{it:req-eventually-periodic}
    two words $a,b\in C^*$ with $|b|\geq 1$.
\end{enumerate}
Then it is computationally decidable if $w\in\WP(\left\langle X\right\rangle,X,\Stab(ab^\omega))$.
That is, the subgroup membership problem is solvable uniformly over all bounded automata groups $G = \left\langle X\right\rangle$, and uniformly over all eventually periodic rays $\eta= ab^\omega$.

\end{restatable*}
We then treat the case of not eventually periodic computable rays and
show that membership to $\WP(\left\langle X \right\rangle,X,\Stab(\eta))$ is a so-called \emph{promise problem}, solvable uniformly over all finite sets $X$ of bounded automaton automorphisms and computable rays $\eta$, as long as we have a promise that the given ray is not eventually periodic.

This promise is necessary, as we see in \cref{prop:unsolvable}.

\begin{restatable*}{proposition}{PropSolvabilityb}\label{prop:solvableb}
Suppose we are given as input
\begin{enumerate}
    \item a finite symmetric set of bounded automaton automorphisms $X$ of the tree $\Td = C^*$;
    \item a word $w\in X^*$ over the generating set $X$; and
    \item\label{it:req-eventually-periodic2}
    a Turing machine $T$ which outputs an infinite ray $\eta\in C^\omega$.
\end{enumerate}
Then, given the promise that $T$ does not generate an eventually periodic ray, 
it is computationally decidable if $w\in \WP(\left\langle X \right\rangle,X,\Stab(\eta))$.
That is, membership to $\WP(\left\langle X\right\rangle,X,\Stab(\eta))$ is decidable as a promise problem uniformly for all bounded automata groups and uniformly for all non-eventually-periodic computable rays.
\end{restatable*}

It is interesting to note that one cannot remove the promise from \cref{prop:solvableb}, in fact, if one attempts to do so then the problem is no longer computable, as shown in the following Proposition.

\begin{restatable*}{proposition}{PropUnsolvability}\label{prop:unsolvable}
There is no Turing machine which can take as input
\begin{enumerate}
    \item a finite symmetric set of bounded automaton automorphisms $X$ of the tree $\Td=C^*$;
    \item a word $w\in X^*$ over the generating set $X$, and
    \item a Turing machine $T$ which outputs an infinite ray $\eta\in C^*$;
\end{enumerate}
then decide if $w\in \WP(\left\langle X \right\rangle,X,\Stab(\eta))$.
In particular, this means that the computation as described in \cref{prop:solvableb} cannot be generalised to a decision procedure, that is, the `promise' in \cref{prop:solvableb} is necessary.
\end{restatable*}

The paper is organised as follows. \Cref{sec:bounded-automata-groups,sec:et0l-language} are devoted to a general introduction to bounded automata groups and ET0L languages, but also serve to provide useful lemmas and definitions needed in the rest of the paper.
Section 2 also contains the proofs of \cref{prop:solvable,prop:solvableb,prop:unsolvable}. The proof of \cref{thm:main} is entirely contained in \cref{sec:main}, while \cref{sec:not-CF} gives criteria that can be used to show that the language of stabilisers is not context-free.
Lastly, \cref{sec:future} contains a list of open questions and topics for future research.

\section{Bounded Automata Groups}\label{sec:bounded-automata-groups}

As in the introduction, we write $\Td$ for the $d$-regular rooted tree. We identify the vertices with the words in $C^*$ where $C = \{c_1,c_2,...,c_d\}$, with the root labelled by the empty word $\varepsilon\in C^*$.
See \cref{fig:tree-vertex-labelling} for a depiction of this tree.

\begin{figure}[H]
	\centering
	\includegraphics{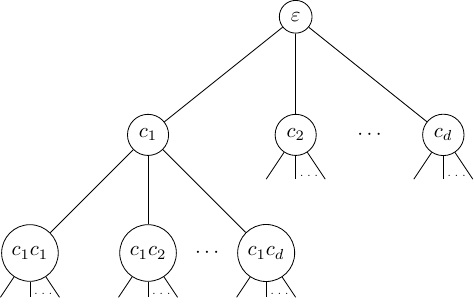}
	\caption{A labelling of the vertices of $\Td$.}
	\label{fig:tree-vertex-labelling}
\end{figure}

We write $\AutTd$ for the group of automorphisms of $\Td$. Every automorphism $\alpha \in \AutTd$ fixes the root and preserves the levels of the tree.
In fact, $\AutTd = \AutTd\wr\mathrm{Sym}(C)$ where $\mathrm{Sym}(C)$ is the symmetric group on the set $C$.
That is, each automorphism $\alpha\in \AutTd$ can be uniquely written in the form $\alpha = (\alpha'_1, \alpha'_2,...,\alpha'_d)\cdot s$ where each $\alpha'_i\in \AutTd$ is an automorphism of the subtree rooted at $c_i$ (which is isomorphic to $\Td$), and $s\in \mathrm{Sym}(C)$ is a permutation of the  subtrees rooted in the vertices of the first level.
The automorphism $\alpha'_i$, with $i=1,2,..., d$, is called the \emph{section of $\alpha$ at $c_i$}, and will be denoted $\alpha\at{c_i}$.
Then the \emph{section} $\alpha \at{v}$ at an arbitrary vertex $v=c_{i_1} c_{i_2} \cdots c_{i_m} \in C\star$ is defined recursively as
\[
	\alpha\at{v}
	=
	\left(\alpha\at{c_{i_1}}\right)\at{c_{i_2} c_{i_3} \cdots c_{i_m}}.
\]
That is, $\alpha\at v$ is the action that the element $\alpha$ has on the subtree rooted at $v$.

\begin{definition}\label{def:automata-automorphism}
An automorphism $\alpha \in \AutTd$ is an \emph{automaton automorphism} if there exists a finite set $A_\alpha\subset \AutTd$ such that $\alpha \at v \in A_\alpha$ for each $v\in C^*$.
The set of all automaton automorphisms forms a group $\mathrm{AAut}(\Td)$.
A group $G\leq\AutTd$ is called an \emph{automata group} if $G\leq \mathrm{AAut}(\Td)$.
\end{definition}

In the literature, the class of automaton automorphisms is usually introduced by first defining a computational model known as a finite-state automaton, for example, see Definition~1.3.1 in~\cite{Nekrashevych2005book}.
Our definition is equivalent, in particular, a finite set of states for an automaton representing an automaton automorphism $\alpha$ is given by the set $A_\alpha$ from \cref{def:automata-automorphism}.
Automata groups are often assumed to be self-similar (or state-closed). We do not make this assumption until Section 5 when it will be explicitly specified.

From the definition of an automaton automorphism, it is clear that it is determined by a finite amount of data. Therefore, any such automorphism can be encoded and provided as input to a Turing machine.
Moreover, it is computable to check if a given automaton is the identity, and composition of such automorphisms is also computable, see section 1.3.5 in \cite{Nekrashevych2005book} for more details.

We will be interested here in the subgroup membership problem in automata groups. As explained in the introduction, an important family of subgroups in a group of automorphisms of a regular rooted tree is formed by the stabilisers of the vertices of the tree and of the elements of the boundary of the tree (aka infinite rays emanating from the root). We noted in the introduction that the  subgroup membership problem in the stabiliser of a vertex of the tree is a regular language, and from now on we only concentrate on the problem of membership in the stabilisers of infinite rays.
At this point, we can show that for an automata group, the subgroup membership problem is decidable for stabilisers of eventually periodic rays.
We thank the anonymous reviewer for suggesting to us a sketch of the proof  of the following Proposition.

\PropSolvability

\begin{proof}
Suppose that we are given some finite set $X$ and segments $a,b\in C^*$.
Suppose also that we are given a word $w \in X^*$.
Then, we can compute a description of the corresponding element $\overline{w} \in \mathrm{AAut}(\Td)$.
In particular, suppose that $A_{\overline w}$ is the finite set of automorphisms as in \cref{def:automata-automorphism}.
Let $K = |A_{\overline w}|$.
As we have a description of the automaton automorphism $\overline w$, we can compute the vertex $v = (a\, b^{K+1}) \cdot \overline{w}$.
In the remainder of this proof, we show that $w \in \WP(\left\langle X \right\rangle, X, \Stab(\eta))$ if and only if $v = a\, b^{K+1}$.

Suppose that $v\neq a\, b^{K+1}$, then the action of $\overline{w}$ is non-trivial on some prefix of $\eta$ and thus is non-trivial on $\eta$. From this, we conclude that $w\notin \WP(\left\langle X \right\rangle, X, \Stab(\eta))$ as required.

Now suppose that $v = a\, b^{K+1}$, that is, that $\overline{w}$ has trivial action on the vertex $a\, b^{K+1}$.
By the pigeonhole principle, we then see that there must exist two distinct values $k_1,k_2\in \{0,1,...,K\}$ with $k_1> k_2$ such that
\[
    \beta\coloneqq \overline{w}\,\at\, (a \, b^{k_1}) = \overline{w}\,\at\, (a \, b^{k_2})
\]
where $b^{k_1-k_2} \cdot \beta = b^{k_1-k_2}$.
Thus, from the definition of the automaton automorphisms we see that
\[
    \eta\cdot\overline{w} = (ab^\omega)\cdot \overline{w}
    =
    (a b^k_2) (b^{k_1-k_2})^\omega=\eta.
\]
That is, $\overline{w}$ has trivial action on the ray $\eta$.
\end{proof}

From now on, we will be interested in  $\Td$ by \emph{bounded automaton automorphisms} defined as follows.

\begin{definition}\label{def:bounded-automata-automorphism}
We say that an automorphism $\alpha \in \AutTd$ is \emph{bounded}~\cite{sidki2000} if there exists some constant $N_\alpha$ such that
\[
    \#\{
        v\in C\star
    \mid
        \alpha\at{v} \neq 1
        \text{ and }
        |v|=k
    \} < N_\alpha
\]
for every positive integer $k$. The set of all bounded automaton automorphisms forms a group which we denote $\BTd < \mathrm{AAut}(\Td)$.
A group $G$ is called a \emph{bounded automata group} if  $G\leq \BTd$.
\end{definition}

In~\cite{sidki2000}, Sidki considered two classes of bounded automaton automorphisms, known as \emph{finitary} and \emph{directed} automaton automorphisms, and showed that they form a generating set for the group of bounded automaton automorphisms $\BTd$, see \cref{prop:ba_gset}.

\begin{definition}
An automorphism $\phi \in \AutTd$ is \emph{finitary} if there exists a constant $N_\phi \in \N$ such that $\phi\at{v} = 1$ for each $v\in C\star$ with $|v|\geq N_\phi$.
The smallest constant for which this holds is the \emph{depth} of the automorphism $\phi$, denoted as $\mathrm{depth}(\phi)$.
\end{definition}

Finitary automorphisms form a subgroup of $\BTd$ which we denote as $\FinTd$.
Examples of finitary automorphisms are given in \cref{fig:finitary-examples}

\begin{figure}[h!t]
	\centering
	\begin{minipage}[t]{.3\linewidth}
		\centering
		\includegraphics{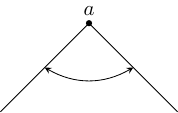}
	\end{minipage}
	~
	\begin{minipage}[t]{.3\linewidth}
		\centering
		\includegraphics{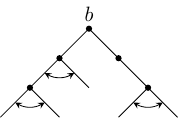}
	\end{minipage}
	\caption{Examples of finitary automorphisms $a,b\in\FinTd$.}
	\label{fig:finitary-examples}
\end{figure}

For any automorphism $\phi \in \AutTd$, finitary or not, we will also need the notion of its \lq\lq directional depth\rq\rq with respect to a vertex of the tree or with respect to an infinite ray, that we now introduce.

\begin{definition}\label{def:compdepth}
The \emph{directional depth} of  an automorphism with respect to a word, finite or infinite, in the alphabet $C$ is given by the function
$\CompDepth\colon (C^*\cup C^\omega) \times \AutTd \to \mathbb N \cup \{\infty\}$ defined as
\[
    \CompDepth(\zeta,x)
    =
     \begin{cases}
    \min\{ 
        |v|
    \mid
        v \text{ is a prefix of }\zeta\text{ with }x\at v =1
    \}
    \\
     \infty \ \text{ if } x\at{v}\neq 1 \text{ for each prefix } v \text{ of } \zeta .
     \end{cases}
\]

\end{definition}

Let us now turn to directed automorphisms. 

\begin{definition} \label{def:directed}
A bounded automaton automorphism $\delta \in \AutTd$ is \emph{directed} if there exists a unique infinite word $c_{i_1}c_{i_2}\ldots c_{i_m} \ldots$ such that $\delta\at c_{i_1}c_{i_2}\ldots c_{i_m} \neq 1$ for all $m$. Such word is called \emph{spine} and denote by $\mathrm{spine}(\delta)$.
\end{definition}

We denote the set of all directed automaton automorphisms as $\DirTd$.
See \cref{fig:directed-examples} for some examples (in these examples, $a$ and $b$ are as in \cref{fig:finitary-examples}).
\begin{figure}[h!t]
	\centering
	\begin{minipage}[t]{.3\linewidth}
		\centering
		\includegraphics{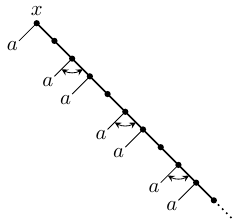}
	\end{minipage}
	~
	\begin{minipage}[t]{.3\linewidth}
		\centering
		\includegraphics{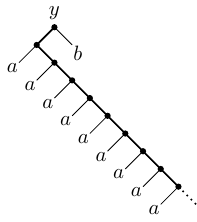}
	\end{minipage}
	~
	\begin{minipage}[t]{.3\linewidth}
		\centering
		\includegraphics{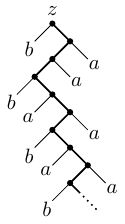}
	\end{minipage}
	\caption{Examples of directed automorphisms $x,y,z \in \mathrm{Dir}(\mathcal{T}_2)$.}
	\label{fig:directed-examples}
\end{figure}

The composition of two elements in $\DirTd$ might not be in $\DirTd$. However, the resulting automorphism has at most two infinite rays with the same behaviour of the spines of the directed automorphisms. Indeed, if $\delta, \delta' \in \DirTd$, then either $\mathrm{spine}(\delta)\cdot\delta=\mathrm{spine}(\delta')$ and $\delta\delta'$ is still a directed automorphism with $\mathrm{spine}(\delta\delta')=\mathrm{spine}(\delta)$, or $\delta\delta'$ has two infinite rays $\mathrm{spine}(\delta)$ and $\mathrm{spine}(\delta')\cdot \delta^{-1}$ such that the restrictions on each prefix is non-trivial (note that $\delta^{-1}$ is acting as a finitary automorphism on $\mathrm{\delta'}$). The same holds more generally for a product of directed automorphisms.

\begin{proposition}[Proposition 16 in \cite{sidki2000}]\label{prop:ba_gset}
	The group $\mathcal{B}(\Td)$ of bounded automaton automorphisms is generated by $\FinTd$ together with $\DirTd$.
\end{proposition}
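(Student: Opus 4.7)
The plan is to establish both inclusions. The inclusion $\langle \FinTd \cup \DirTd\rangle \leq \BTd$ is immediate, since $\FinTd$ and $\DirTd$ are both contained in $\BTd$ and $\BTd$ is a group. For the converse, given $\alpha \in \BTd$, I would associate to it the \emph{active subtree}
\[
    T_\alpha = \{ v \in C^* : \alpha\at v \neq 1 \},
\]
viewed as a rooted subtree of $\Td$. By the boundedness condition, $|T_\alpha \cap C^k| < N_\alpha$ for every level $k$. Let $r(\alpha)$ denote the number of infinite rays of $T_\alpha$, i.e.\ the number of $\eta \in C^\omega$ all of whose finite prefixes lie in $T_\alpha$; the uniform level bound forces $r(\alpha) \leq N_\alpha$, so in particular $r(\alpha)$ is finite.

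I would then induct on $r(\alpha)$. For the base case $r(\alpha) = 0$, K\"onig's lemma applied to the locally finite tree $T_\alpha$ forces $T_\alpha$ to be finite, and so $\alpha\at v = 1$ for all $v$ below a uniform depth, whence $\alpha \in \FinTd$. For the inductive step with $r(\alpha) = m \geq 1$, pick an infinite ray $\eta \in T_\alpha$ and choose a level $L$ beyond which $\eta$ shares no vertex with any other infinite ray of $T_\alpha$; such $L$ exists because $T_\alpha$ has only finitely many such rays. Construct a directed automorphism $\delta \in \DirTd$ with $\mathrm{spine}(\delta) = \eta$ whose root permutation and sections along $\eta$ agree with those of $\alpha$, with off-spine behaviour taken finitary so that the required cancellation occurs. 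Then $\alpha \cdot \delta^{-1}$ has trivial sections along $\eta$ past level $L$, so $r(\alpha \delta^{-1}) \leq m - 1$. By the inductive hypothesis $\alpha \delta^{-1} \in \langle \FinTd \cup \DirTd\rangle$, and therefore $\alpha = (\alpha \delta^{-1}) \cdot \delta$ does as well.

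The main technical obstacle will be making the construction of $\delta$ precise in the inductive step. Because several infinite rays of $T_\alpha$ may share long common prefixes with $\eta$, one cannot simply read off a directed automorphism from the sections of $\alpha$ along $\eta$ while taking the off-spine behaviour to be identity: the off-spine finitary parts of $\delta$ must be chosen so that $\alpha \delta^{-1}$ is still bounded and does not inadvertently create a new infinite active ray. Once $L$ is fixed, this reduces to a finite bookkeeping exercise at levels $k \leq L$, where the permutations and sections of $\delta$ can be fitted to those of $\alpha$ by hand, after which the spine portion takes care of itself by construction. A cleaner variant would be to absorb all discrepancies into a single left finitary correction, defining $\delta$ so that $\delta \at{\eta_k}$ matches $\alpha \at{\eta_k}$ for every prefix $\eta_k$ of $\eta$ and then multiplying $\alpha \delta^{-1}$ by a finitary element to clear residual disagreement at levels $\leq L$.
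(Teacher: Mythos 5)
First, note that the paper does not prove this statement at all: it is quoted directly as Proposition~16 of \cite{sidki2000}, with only a remark that Sidki's proof is constructive. So your proposal can only be measured against the standard argument. Your skeleton --- pass to the activity tree $T_\alpha$, observe it has at most $N_\alpha$ infinite rays, handle $r(\alpha)=0$ by K\"onig's lemma, and peel off one directed factor per infinite ray --- is sound and is essentially Sidki's strategy. But the step you yourself flag as the ``main technical obstacle'' contains a genuine error, not just bookkeeping. You ask for $\delta$ with $\mathrm{spine}(\delta)=\eta$ whose ``sections along $\eta$ agree with those of $\alpha$''; taking the prefix $\eta_0=\varepsilon$ this forces $\delta=\alpha$, and weakening it to root permutations does not give the cancellation you need, because with right actions the relevant identity is $(\alpha\delta^{-1})\at w=(\alpha\at w)\bigl(\delta\at{w\acts\alpha\delta^{-1}}\bigr)^{-1}$: the sections of $\delta$ that must match those of $\alpha$ sit at the \emph{translated} vertices $w\acts\alpha\delta^{-1}$, not on $\eta$ itself. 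Two further points are silently assumed: $\delta$ must be a bounded \emph{automaton} automorphism (``off-spine behaviour taken finitary'' guarantees neither finitely many sections nor boundedness), and a directed automaton automorphism with spine $\eta$ can only exist if $\eta$ is eventually periodic --- a fact you cannot import from the paper's discussion following \cref{prop:ba_gset} without circularity, though it can be proved directly by a pigeonhole argument on the sections $\alpha\at{\eta_k}$.

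The fix is to stop insisting that $\mathrm{spine}(\delta)=\eta$ from the root. Choose $L$ so that at level $L$ the infinite rays of $T_\alpha$ pass through distinct vertices, let $v$ be the level-$L$ prefix of $\eta$, and let $\delta$ be the automorphism acting as $\alpha\at v$ on the subtree rooted at $v\acts\alpha$ and trivially elsewhere. Every section of $\delta$ is a section of $\alpha$ or trivial, so $\delta$ is a bounded automaton automorphism; it has exactly one infinite active ray, hence is directed; and for $k\geq L$ one computes $\eta_k\acts\alpha\delta^{-1}=\eta_k\acts\alpha\cdot(\alpha\at v)^{-1}$ lands back so that $(\alpha\delta^{-1})\at{\eta_k}=(\alpha\at{\eta_k})(\alpha\at{\eta_k})^{-1}=1$, while the remaining infinite rays of $\alpha$, which avoid $v$, are untouched. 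This makes your induction go through --- and in fact doing it simultaneously for all level-$L$ vertices yields the whole decomposition in one step, $\alpha=\phi\cdot\prod_{v\in C^L}(v\acts\alpha,\alpha\at v)$ with $\phi$ the finitary depth-$L$ truncation of $\alpha$, each factor having at most one infinite active ray and hence lying in $\FinTd\cup\DirTd$.
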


The proof of Proposition~16 in \cite{sidki2000} is constructive, that is, given a bounded automaton automorphism $\alpha$, it is computable to find a finite decomposition
    $
        \alpha = s_1 s_2 \cdots s_k
    $
    where each $s_i\in \FinTd \cup \DirTd$.
This follows since it is composition and equality is computable in the set of automaton automorphisms.
  In particular, given a bounded automaton automorphism, $\alpha$, one can nondeterministically choose such a decomposition $s_1 s_2 \cdots s_k$, where each $s_i\in \FinTd\cup\DirTd$, then verify that it presents the same automorphism as $\alpha$.

\begin{proposition}[Lemma~3 on~p.~87 of~\cite{Bishop2019}]
\label{lemma:spine is eventualy periodic}
	The spine, $\mathrm{spine}(\delta) \in C^\omega$, of a directed automaton automorphism, $\delta \in \DirTd$, is eventually periodic, i.e., there are words $u = u_1 u_2 \cdots u_s \in C\star$ and $v = v_1 v_2 \cdots v_t \in C\star$ with $v \neq \varepsilon$, called the \emph{initial} and \emph{periodic} segment respectively, for which $\mathrm{spine}(\delta) = u v^\omega$ and
	\[
		\delta
		\at{u v^k v_1 v_2 \cdots v_j}
		=
		\delta
		\at{u v_1 v_2 \cdots v_j}
	\]
	for each $k,j \in \N$ with $0\leq j \leq t$.
\end{proposition}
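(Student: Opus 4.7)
The plan is to combine the defining finiteness of the section set with the uniqueness of the spine in a directed automaton automorphism. Let $\mathrm{spine}(\delta) = w_1 w_2 w_3 \cdots$ and consider the sequence of sections along the spine, namely $(\delta\at{w_1 w_2 \cdots w_i})_{i\geq 0}$. By \cref{def:automata-automorphism}, these all lie in the finite set $A_\delta\subset \AutTd$, so by the pigeonhole principle there exist indices $s\geq 0$ and $t\geq 1$ with
\[
    \delta\at{w_1 w_2 \cdots w_s}
    =
    \delta\at{w_1 w_2 \cdots w_{s+t}}.
\]
Pick such a pair with $s$ minimal and then $t$ minimal, and set $u = w_1\cdots w_s$ and $v = w_{s+1}\cdots w_{s+t}$.

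Next I would verify that $\mathrm{spine}(\delta) = u v^\omega$. The section $\delta\at u$ is, by \cref{def:directed}, a directed automorphism whose unique non-trivial infinite path is $w_{s+1} w_{s+2}\cdots$. The section $\delta\at{uv}$ is likewise directed, with unique non-trivial infinite path $w_{s+t+1} w_{s+t+2}\cdots$. Since the two sections are equal as automorphisms of $\Td$, their unique non-trivial rays coincide letter by letter, which forces $w_{s+t+i} = w_{s+i}$ for every $i\geq 1$. Hence the tail of the spine past $u$ is $v$-periodic, giving $\mathrm{spine}(\delta) = u v^\omega$ as required.

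Finally I would establish the identity $\delta\at{u v^k v_1\cdots v_j} = \delta\at{u v_1\cdots v_j}$ by induction on $k$. The base case $k=0$ is trivial. For the inductive step, apply the recursive definition of sections: since $\delta\at{u v} = \delta\at{u}$, we can strip off one copy of $v$ from the left of any descendent, because $\delta\at{uv\cdot z} = (\delta\at{uv})\at z = (\delta\at u)\at z = \delta\at{u\cdot z}$ for every $z\in C^*$. Iterating this $k$ times with $z = v^{k-1} v_1\cdots v_j$ (or equivalently applying the induction hypothesis) gives the desired equality for all $k\geq 0$ and all $0\leq j\leq t$.

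The main obstacle is not any single computation but rather the subtle use of the uniqueness of the spine in the second step: one has to note that equality of two directed sections as automorphisms, combined with uniqueness of their non-trivial ray, is what propagates the coincidence from a single index $s+t$ all the way down the ray. Once that observation is in hand, the pigeonhole argument and the inductive unwinding of sections are routine.
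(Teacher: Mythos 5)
The paper does not prove this statement itself---it imports it by citation as Lemma~3 of \cite{Bishop2019}---so there is no in-paper proof to compare against. Your argument is correct and is the natural one: pigeonhole on the finitely many sections $\delta\at{w_1\cdots w_i}\in A_\delta$ along the spine, then use the fact that $\delta\at{u}$ and $\delta\at{uv}$ are themselves directed with unique non-trivial rays $w_{s+1}w_{s+2}\cdots$ and $w_{s+t+1}w_{s+t+2}\cdots$ (uniqueness holding because any other non-trivial ray of $\delta\at u$ would prepend with $u$ to give a second spine of $\delta$), so their equality forces the tail to be $t$-periodic; the section identity then follows by iterating $\delta\at{uv\cdot z}=(\delta\at{uv})\at z=(\delta\at u)\at z=\delta\at{u\cdot z}$. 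No gaps.
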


This generalises to any element of $\BTd$. Since the product of a directed automorphism and a finitary automorphism is directed and by the discussion right after \cref{def:directed} on the product of directed automorphism, by \cref{prop:ba_gset} is clear that a bounded automaton automorphism has finitely many infinite rays such that on each prefix the restriction is non-trivial. Moreover, the action of an element acting like a finitary automorphism on an infinite word only changes a finite prefix, and by the same discussion as above, we have that the infinite rays of a bounded automaton automorphism are all eventually periodic.

We are now ready to prove the following lemma which is used to simplify the proof of \cref{thm:main}.

\begin{lemma}\label{lem:fg.ss.ba_overgroup}
Let $G < \BTd$ be a finitely generated bounded automata group. There exists a bounded automata group $H$ with a finite symmetric generating set $S \subset \FinTd\cup \DirTd$ such that $G$ is a subgroup of $H$.
Moreover, such a generating set $S$ is effectively constructable from a finite set of bounded automaton automorphisms $X$ where $G = \left\langle X \right\rangle$.
\end{lemma}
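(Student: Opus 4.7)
The plan is to apply \cref{prop:ba_gset} generator-by-generator in an effective way, collect all the pieces, and close under inversion. Concretely, let $X = \{x_1, x_2, \ldots, x_n\}$ be the given finite set of bounded automaton automorphisms generating $G$. For each $x_i \in X$, I invoke the constructive version of \cref{prop:ba_gset} to obtain a decomposition
\[
    x_i \;=\; s_{i,1}\, s_{i,2}\, \cdots\, s_{i,k_i},
    \qquad s_{i,j} \in \FinTd \cup \DirTd.
\]
Setting $S_0 = \bigcup_{i=1}^{n} \{s_{i,1}, s_{i,2}, \ldots, s_{i,k_i}\}$ and then $S = S_0 \cup S_0^{-1}$, I define $H = \left\langle S \right\rangle$.

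It is then immediate that $G \leq H$: each generator $x_i$ of $G$ is by construction a word over $S$, so $X \subseteq H$ and hence $G = \left\langle X \right\rangle \leq H$. On the other hand, $S \subset \FinTd \cup \DirTd \subseteq \BTd$, and since $\BTd$ is a group we have $H = \left\langle S \right\rangle \leq \BTd$, so $H$ is a finitely generated bounded automata group. The symmetry of $S$ follows directly from closing under inversion, using the fact that $\FinTd$ is a subgroup and that the inverse of a directed automorphism is directed (with the same spine), so that $S_0^{-1} \subset \FinTd \cup \DirTd$ as well.

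The main step that requires attention is the \emph{effectiveness} of the decomposition, since \cref{prop:ba_gset} is originally stated as an existence result. However, the excerpt already observes that the proof of Proposition~16 in~\cite{sidki2000} is constructive: given a bounded automaton automorphism presented by its finite defining data, one can nondeterministically choose a candidate decomposition $s_1 s_2 \cdots s_k$ with each $s_i \in \FinTd \cup \DirTd$, and then use the decidability of composition and equality in $\mathrm{AAut}(\Td)$ to verify that the product equals the target element. Enumerating candidates in order of length and testing each one yields an algorithm that terminates, because a decomposition is guaranteed to exist. This gives an effective procedure that on input $X$ outputs $S$, finishing the proof. The only conceptual hurdle is recognising that finitary and directed automaton automorphisms can be algorithmically identified among automaton automorphisms (one checks, respectively, that all sections at a sufficient depth are trivial, or that the rays along which sections are non-trivial form a single eventually periodic spine, which is decidable by inspecting the finite state set $A_\alpha$ of \cref{def:automata-automorphism}).
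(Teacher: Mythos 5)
Your proof is correct and follows essentially the same route as the paper's: decompose each generator via \cref{prop:ba_gset}, collect the resulting factors together with their inverses to form the symmetric set $S$, and appeal to the constructivity of Sidki's decomposition (plus decidability of composition and equality of automaton automorphisms) for effectiveness. One small inaccuracy in a parenthetical: the inverse of a directed automorphism $\delta$ is directed with spine $\mathrm{spine}(\delta)\cdot\delta$, which need not equal $\mathrm{spine}(\delta)$ (consider the binary adding machine), but this does not affect your argument since all that is needed is that $\delta^{-1}\in\DirTd$.
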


\begin{proof}
Let $X$ be a finite generating set for the group $G$.
From \cref{prop:ba_gset}, we see that for each $x\in G$,  in particular, for  each $x\in X$, there is a word $x = w_{x,1} w_{x,2} \cdots w_{x,k(x)}$ with each $w_{x,i} \in \FinTd\cup \DirTd$.
Define a symmetric generating set
\[
    S
    =
    \{ 
        w_{x,j}
        ,
        (w_{x,j})^{-1}
    \mid    x\in X, j\in \{1,2,...,k(x)\}
    \}.
\]
From the definition of finitary and directed automaton automorphisms, we see that $S\subset \FinTd\cup\DirTd$, as desired.
Moreover, the group generated by $S$ contains $G$ as a subgroup.
It follows from the observation  after \cref{prop:ba_gset} that the finite set $S$ is computable from $X$.
\end{proof}

In \cref{prop:solvable}, we showed that the subgroup membership problem for the stabiliser subgroup of an eventually periodic ray is computable for arbitrary finitely generating sets of automaton automorphisms.
In the case of bounded automata automorphisms, we are able to show the following companion property for non-eventually-periodic computable rays.
However, it is a promise problem, moreover, we show in \cref{prop:unsolvable} below that it cannot be turned into a proper decision problem.

\PropSolvabilityb

\begin{proof}
We begin by computing an automaton automorphism for the action of $\overline{w}$.\ 

Since the infinite rays of a bounded automaton automorphism is eventually periodic (see \cref{lemma:spine is eventualy periodic} and the discussion right after it), we then see that the ray $\eta$, as described by $T$, must eventually leave this finite set of rays.
Thus, we see that $\overline{w}$ only performs an action on a finite prefix of $\eta$.
Moreover, the length of this prefix is computable from the description of the automaton for $\overline{w}$ and prefixes of $\eta$.
From these observations, it follows immediately that $w\in \WP(\left\langle X \right\rangle, X, \Stab(\eta))$ is decidable.
\end{proof}

\PropUnsolvability
\begin{proof}
We begin by introducing a decision problem which we call \textit{Periodicity}:
\begin{description}
  \item[Input:] A Turing-machine-based description of an infinite ray $\eta \in \{0,1\}^{\omega}$.
  \item[Question:] Is $\eta = 1^{\omega}$?
\end{description}
Periodicity is essentially a reformulation of the halting problem. For completeness, let us outline the reduction.  
Given a Turing machine $T$, construct a machine $P$ that simulates $T$ and, at each step of the simulation, outputs the letter $1$. If $T$ ever halts then $P$ switches and outputs only $0$’s from that point onward, thus producing the sequence $1^k 0^{\omega}$ for some $k\in\N$.  
Hence, $P$ outputs the infinite sequence $1^{\omega}$ if and only if $T$ does not halt. Therefore, if the problem \textit{Periodicity} were decidable, then the halting problem would also be decidable, a contradiction.

We now reduce \textit{Periodicity} to the problem in the statement. Consider the infinite dihedral  group $\mathcal{D}$ (see \cref{fig:dihedral-automaton}), a well-known bounded automaton group generated by $a$ and $b$. It is straightforward to check that for an infinite ray $\eta$ one has $\eta \cdot b = \eta$ if and only if $\eta = 1^{\omega}$.
Suppose by contradiction that our problem is decidable. Then there exists a Turing machine $M$ that, given a bounded automaton group $G$, a Turing machine $T_{\eta}$ producing the infinite ray $\eta$, and a word $w\in X^*$ over the generating set $X$ of $G$, decides whether $w \in \WP(G,X,\Stab(\eta))$. Applied to the input $(\mathcal{D},T_{\eta},b)$, the machine $M$ would decide whether
$b \in \WP(\mathcal{D},\{a,b\},\Stab(\eta))$, that is, whether $\eta \cdot b = \eta$, which is equivalent to deciding whether $\eta = 1^{\omega}$. Thus, $M$ would solve \textit{Periodicity}, which we have shown to be undecidable. This contradiction shows that such a machine $M$ cannot exist.  
\end{proof}

\section{ET0L Languages}\label{sec:et0l-language}

In this section, we define and provide a background on the family of \emph{Extended Tabled 0-interaction Lindenmayer} (\emph{ET0L}) languages (see \cref{def:et0l grammar}) which was introduced and studied by Rozenberg~\cite{Rozenberg1973}.
We begin by defining the ET0L languages in terms of a class of formal grammars.
We conclude this section by studying a particular subclass of ET0L language in \cref{sec:derivation-tree} and show in \cref{sec:gfun} that ET0L languages from this class have generating functions which we can specify using equations of a particular form.

Below, we give a definition of ET0L languages which is due to Asveld~\cite{asveld1977}.
In particular, the definition we use in this paper is what Asveld refers to as a $(\mathrm{REG},\mathrm{REG})$ITER grammar (cf.~the definitions on pp.~253-4 of~\cite{asveld1977}).
The proof that this is equivalent to the definition given by Rozenberg in \cite{Rozenberg1973} follows from Theorem~2.1 in \cite{asveld1977}, and Theorems~2 and~3 in \cite{Nielson1975}.
(Note that RC-Part ET0L in \cite{Nielson1975} has the same definition as $(\mathrm{REG})$ITER in \cite{asveld1977}).

An \emph{ET0L grammar} is a type of replacement system which has both a terminal alphabet $\Sigma$ and a disjoint nonterminal alphabet $V$.
In particular, our grammar begins with an initial symbol $S\in V$.
We then perform a sequence of allowable replacements to this symbol until we have a word which consists of only letters in $\Sigma$.
Such a word is then said to be generated by the grammar.
Allowable replacements are given by \emph{tables}, defined as follows.

\begin{definition}\label{def:tables}
    A \emph{table} is a function of the form $\tau\colon \Sigma\cup V\to \Reg(\Sigma\cup V)$ where $\Reg(\Sigma\cup V)$ denotes the family of regular languages over the alphabet $\Sigma\cup V$ and $\tau(\sigma)=\{\sigma\}$ for each $\sigma\in \Sigma$.
\end{definition}

Since the elements of $\Sigma$ are fixed, we do not specify them when we explicitly provide a table. Suppose that $\tau \colon \Sigma\cup V \to \Reg(\Sigma \cup V)$ is a table as defined above.
Then we write $w\to^\tau w'$ for each word $w = w_1 w_2 \cdots w_m \in (\Sigma\cup V)\star$ and each word $w' = w_1' w_2'\cdots w_m'$ where each $w_i'$ belongs to the regular language $ \tau(w_i)$. 
For tables $\tau_1$, $\tau_2$, \ldots, $\tau_k$, we write $w\to^{\tau_1 \tau_2 \cdots \tau_k} w'$ if there are words $w_1,w_2,\ldots,w_{k+1} \in (\Sigma\cup V)\star$ with $w_1=w$, $w_{k+1} = w'$ and $w_i \to^{\tau_i} w_{i+1}$ for each $i$.

For example, let $\Sigma = \{a,b\}$ and $V = \{S,A,B\}$, then
\begin{equation}\label{eq:alternative-tables-example}
	\alpha \colon
	\left\{
	\begin{aligned}
		S &\mapsto \{SS, S, AB\}\\
		A &\mapsto \{A\}\\
		B &\mapsto \{B\}
	\end{aligned}
	\right.
	\qquad
	\beta \colon
	\left\{
	\begin{aligned}
		S &\mapsto \{S\}\\
		A &\mapsto \{aA\}\\
		B &\mapsto \{bB\}
	\end{aligned}
	\right.
	\qquad
	\gamma \colon
	\left\{
	\begin{aligned}
		S &\mapsto \{S\} \\
		A &\mapsto \{\varepsilon\} \\
		B &\mapsto \{\varepsilon\}
	\end{aligned}
	\right.
\end{equation}
are tables.
We see that $w\to^\alpha w'$ where $w=SSSS$ and $w' = SABSSAB$.

We can now define ET0L grammars, as follows.

\begin{definition}\label{def:et0l grammar}
    An \emph{ET0L grammar} is a 5-tuple $E = (\Sigma, V, T, \mathcal{R}, S)$, where
	\begin{enumerate}
		\item $\Sigma$ is an alphabet of \emph{terminals};
		\item $V$ is an alphabet of \emph{nonterminals};
		\item\label{def:et0l grammar/tables} $T = \{\tau_1, \tau_2 ,\dots, \tau_k\}$ is a finite set of \emph{tables},
		\item $\mathcal{R} \subseteq T\star$ is a regular language called the \emph{rational control}; and
		\item $S \in V$ is the \emph{start symbol}.
	\end{enumerate}
    We then say that
	\[
		L(E)
		=
		\left\{
			w \in \Sigma\star
		\mid 
			S \to^{v} w
			\text{ for some }
			v \in \mathcal{R}
		\right\}
	\]
    is the \emph{ET0L language} generated by the grammar $E$
\end{definition}

For example, let $\alpha$, $\beta$ and $\gamma$ be as in (\ref{eq:alternative-tables-example}), then the language that is produced by the grammar with rational control $\mathcal{R} = \alpha\star \beta\star \gamma$ is $\{( a^n b^n )^m \mid n,m \in \mathbb{N} \}$.
It can then be shown, using the \textit{pumping lemma} (see~\cite[Theorem~2.34]{S2013}), that this language is not context-free.
It is known that every context-free language is also ET0L (see the diagram in T28 on p.~241 of \cite{Rozenberg1986}).

We now introduce some additional notation which will be used in the proof of \cref{thm:main}.
We prefer this notation as it matches the way in which we apply tables from left to right.

\begin{notation}\label{notation:table-right-action}
    Suppose that $E = (\Sigma, V, T, \mathcal R, S)$ is an ET0L language, then for each word $w\in (\Sigma\cup V)^*$ and each sequence of tables $t\in T^*$, we write
    \[
        w \cdot t
        =
        \{
            u \in (\Sigma\cup V)^*
        \mid
            w\to^t u
        \}
    \]
    for the set of all words which can be obtained from $w$ by applying $t$.
\end{notation}

\subsection{Unambiguous Limiting Grammars}\label{sec:derivation-tree}

Each of the ET0L grammars that we construct in \cref{thm:main} has a particular form for which one can compute a description of their generating function (see \cref{thm:gfun}).
In this subsection, we give a description of this class of grammars.
Later in this section, we prove some closure properties, and study the combinatorial complexity of this class.
We begin by describing we mean for a grammar to be \emph{unambiguous} as follows.

Similarly to context-free languages, we can define derivation trees for ET0L grammars.
However, for the derivation tree from an ET0L language, we label each level of the tree to denote the table which is being applied.
For example, consider the language of partitions given as
\[
    L
    =
    \{
        a^{n_1} b a^{n_2} b \cdots b a^{n_k} b
    \mid
        k \geq 1\text{ and }
        n_1 \geq n_2 \geq \cdots \geq n_k \geq 1
    \}.
\]
It was shown in \cite{ciobanu2018} that this language is ET0L, in particular, it is generated by an ET0L language with nonterminals $S$ and $A$, and tables
\[
    \alpha\colon
    \left\{
    \begin{aligned}
        S &\mapsto aAb S\\
        A &\mapsto A
    \end{aligned}
    \right.
    \qquad
    \beta\colon
    \left\{
    \begin{aligned}
        S &\mapsto S\\
        A &\mapsto aA
    \end{aligned}
    \right.
    \qquad
    \text{and}
    \qquad
    \gamma\colon
    \left\{
    \begin{aligned}
        S &\mapsto \varepsilon\\
        A &\mapsto \varepsilon
    \end{aligned}
    \right.
\]
with rational control $\mathcal{R} = \{\alpha, \beta, \gamma\}\star$.
Notice that the word $a^2 b a b a b$ belongs to the language $L$. In particular, this word has a derivation tree labelled by $\alpha\beta\alpha\alpha\gamma$ as given in \cref{fig:derivation-tree}.
We obtain the word $a^2 b abab$ from the tree given in \cref{fig:derivation-tree} by reading off the leaves from left to right.

\begin{figure}[ht!]
\centering

\begin{tikzpicture}

\node (root) at (0,0) {$S$};

\node (l0) at ($(root) - (3,0)$) {$\alpha$};

\draw[dotted] (l0) -- (root);


\node[draw] (l1n1) at ($(root)-(1.5,1)$)  {$a$};
\node (l1n2) at ($(root)-(0.5,1)$)  {$A$};
\node[draw] (l1n3) at ($(root)-(-0.5,1)$) {$b$};
\node (l1n4) at ($(root)-(-1.5,1)$) {$S$};

\draw[-] (root.south) -- (l1n1.north);
\draw[-] (root.south) -- (l1n2.north);
\draw[-] (root.south) -- (l1n3.north);
\draw[-] (root.south) -- (l1n4.north);

\node (l1) at ($(l0)-(0,1)$) {$\beta$};
\draw[dotted] (l1) -- (l1n1) -- (l1n2) -- (l1n3) -- (l1n4);


\node[draw] (l2n1) at ($(l1n2)-(+0.5,1)$)  {$a$};
\node (l2n2) at ($(l1n2)-(-0.5,1)$)  {$A$};
\node (l2n3) at ($(l1n4)-(0,1)$) {$S$};

\draw[-] (l1n2.south) -- (l2n1.north);
\draw[-] (l1n2.south) -- (l2n2.north);
\draw[-] (l1n4.south) -- (l2n3.north);

\node (l2) at ($(l1)-(0,1)$) {$\alpha$};
\draw[dotted] (l2) -- (l2n1) -- (l2n2) -- (l2n3);


\node (l3n1) at ($(l2n2)-(0,1)$)  {$A$};

\node[draw] (l3n2) at ($(l2n3)-(1.5-1,1)$)  {$a$};
\node (l3n3) at ($(l2n3)-(0.5-1,1)$)  {$A$};
\node[draw] (l3n4) at ($(l2n3)-(-0.5-1,1)$) {$b$};
\node (l3n5) at ($(l2n3)-(-1.5-1,1)$) {$S$};

\draw[-] (l2n2.south) -- (l3n1.north);
\draw[-] (l2n3.south) -- (l3n2.north);
\draw[-] (l2n3.south) -- (l3n3.north);
\draw[-] (l2n3.south) -- (l3n4.north);
\draw[-] (l2n3.south) -- (l3n5.north);

\node (l3) at ($(l2)-(0,1)$) {$\alpha$};
\draw[dotted] (l3) -- (l3n1) -- (l3n2) -- (l3n3) -- (l3n4) -- (l3n5);


\node (l4n1) at ($(l3n1)-(0,1)$)  {$A$};
\node (l4n2) at ($(l3n3)-(0,1)$)  {$A$};

\node[draw] (l4n3) at ($(l3n5)-(1.5-1,1)$)  {$a$};
\node (l4n4) at ($(l3n5)-(0.5-1,1)$)  {$A$};
\node[draw] (l4n5) at ($(l3n5)-(-0.5-1,1)$) {$b$};
\node (l4n6) at ($(l3n5)-(-1.5-1,1)$) {$S$};

\draw[-] (l3n1.south) -- (l4n1.north);
\draw[-] (l3n3.south) -- (l4n2.north);

\draw[-] (l3n5.south) -- (l4n3.north);
\draw[-] (l3n5.south) -- (l4n4.north);
\draw[-] (l3n5.south) -- (l4n5.north);
\draw[-] (l3n5.south) -- (l4n6.north);

\node (l4) at ($(l3)-(0,1)$) {$\gamma$};
\draw[dotted] (l4) -- (l4n1) -- (l4n2) -- (l4n3) -- (l4n4) -- (l4n5) -- (l4n6);


\node (l5n1) at ($(l4n1)-(0,1)$)  {$\varepsilon$};
\node (l5n2) at ($(l4n2)-(0,1)$)  {$\varepsilon$};
\node (l5n3) at ($(l4n4)-(0,1)$) {$\varepsilon$};
\node (l5n4) at ($(l4n6)-(0,1)$) {$\varepsilon$};

\draw[-] (l4n1.south) -- (l5n1.north);
\draw[-] (l4n2.south) -- (l5n2.north);
\draw[-] (l4n4.south) -- (l5n3.north);
\draw[-] (l4n6.south) -- (l5n4.north);
\end{tikzpicture}
\caption{Derivation tree for $a^2 b a b a b$ labelled by $\alpha\beta\alpha\alpha\gamma$.}\label{fig:derivation-tree}
\end{figure}
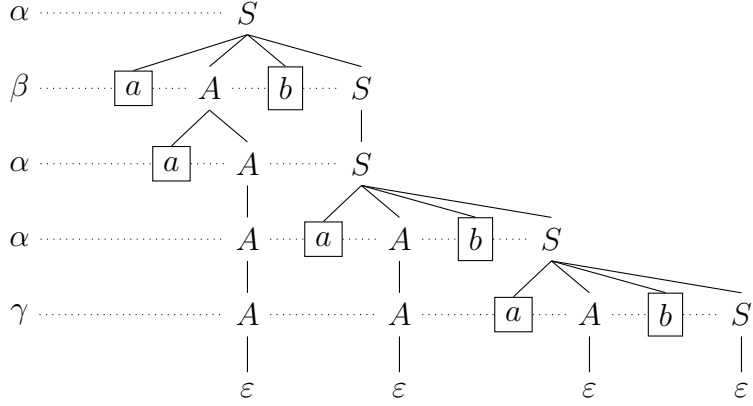

We then say that a derivation tree as in \cref{fig:derivation-tree} is a derivation tree labelled by $\alpha\beta\alpha\alpha\gamma$ and that for each word $w$ with $S \to^{\alpha\beta\alpha\alpha\gamma} w$, there is a derivation tree with the same labelling.
Moreover, by considering other first 4 levels of \cref{fig:derivation-tree}, we see that the word $aaAaAbS$ has a derivation tree labelled by $\alpha\beta\alpha\alpha$. That is, we allow our derivation trees to have nonterminals in their leaves.
We now define unambiguous ET0L languages as follows.

\begin{definition}\label{def:unambiguous-et0l}
    Let $E$ be an ET0L grammar, then we say that $E$ is \emph{unambiguous with respect to its rational control} (or simply \emph{unambiguous}) if for every $r\in \mathcal R$, and every word $w \in (\Sigma\cup V)^*$, there is at most one derivation tree for $w$ labelled by $r$.\footnote{Here we consider all words over terminals and nonterminals, not just words only in terminal letters.}
\end{definition}

We now define the class of ET0L grammars, as in the proof of \cref{thm:main}, as follows.

\begin{definition}\label{def:limiting-et0l}
    Let $E = (\Sigma, V, T,\mathcal{R}, S)$ be an ET0L grammar as in \cref{def:et0l grammar}, then we say that $E$ is \emph{limiting} if
    \begin{enumerate}
        \item\label{def:limiting-et0l/1} $T$ contains 3 tables, i.e., $T = \{\alpha,\beta,\gamma\}$;
        \item\label{def:limiting-et0l/2} the rational control is given by $\mathcal{R} = \alpha\beta\star\gamma$;
        \item\label{def:limiting-et0l/beta} we have $\varepsilon \notin \beta(v)$ for each $v \in V$;
        \item\label{def:limiting-et0l/gamma} we have $\gamma(v) \neq \emptyset$ for each $v \in V$;
        \item\label{def:limiting-et0l/limit}
        (limiting) if $S\to^{\rho_n} w$ for some $w \in (\Sigma\cup V)^*$ where $\rho_n = \alpha\beta^n\gamma$, then there exists some $K \geq n$ such that $S\to^{\rho_k} w$ for each $k \geq K$.
    \end{enumerate}
    Requirements (\ref{def:limiting-et0l/beta}) and (\ref{def:limiting-et0l/gamma}) are technical requirements which allow us to compute the generating function.
\end{definition}

We then say that an ET0L grammar is \emph{unambiguous limiting} if it satisfies the properties in \cref{def:unambiguous-et0l,def:limiting-et0l}.
In the following subsection we show that the class of unambiguous limiting grammars is closed under mappings by injective string transducers.
This result is then used to prove \cref{prop:closure-under-gset} which is used to simplify the proof of our main theorem (i.e.~\cref{thm:main}).

\subsection{Closure under mapping by string transducer}

We now show that an unambiguous limiting ET0L language is closed under mapping by a \emph{string transducer}, also known as a \emph{deterministic finite-state transducer}, or a \emph{deterministic generalised sequential machine (deterministic gsm)}.
We begin with the following definition.

\begin{definition}\label{def:det-fst}
	A (deterministic) \emph{string transducer} is a tuple
	$M = (\Gamma,\penalty51 \Sigma, Q,\penalty51 A,\penalty51 q_0, \penalty51\delta)$ where
	\begin{itemize}
    \item $\Gamma$ and $\Sigma$ are the \emph{input} and \emph{output alphabets}, respectively;
    \item $Q$ is a finite set of \emph{states};
    \item $A\subseteq Q$ is a finite set of \emph{accepting states};
    \item $q_0 \in Q$ is the \emph{initial state}; and
    \item $\delta\colon \Gamma \times Q \to \Sigma^*\times Q$ is a \emph{transition function}.
	\end{itemize}
	Given a language $L\subseteq \Gamma^*$, we may then define the language $M(L)\subseteq \Sigma^*$ as
	\[
		M(L)
		=
		\left\{
		u_1 u_2 \cdots u_k\in \Sigma^*
		\ \middle|\
		\begin{aligned}
			\text{there exists some word }
			w = w_1 w_2 \cdots w_k \in L\subseteq \Gamma^*
			\\\text{ such that }
			\delta(w_i, q_{i-1}) = (u_i, q_i)
			\text{ for each }i \in \{1,2,\ldots,k\} \\
			\text{ where }
			q_0\text{ is the initial state, and }
			q_1,q_2,\ldots,q_k\in Q
			\text{ with }
			q_k\in A
		\end{aligned}
		\right\}.
	\]
	We then say that $M(L)$ is the image of $L$ under mapping by the string transducer $M$.
\end{definition}

	Let $M = (\Gamma,\Sigma,Q,A,q_0,\delta)$ be a string transducer.
	Then, for each pair of states $q,q'\in Q$, and words $w = w_1 w_2 \cdots w_k\in \Gamma^*$ and $w'\in \Sigma^*$, we write $q\to^{(w,w')}q'$ if there is a path from state $q$ to $q'$ which rewrites the word $w$ to $w'$; that is, if there is a sequence of states $q_1,q_2,\ldots,q_{k+1}\in Q$ such that
	\begin{itemize}
		\item $q = q_1$ and $q' = q_{k+1}$; and
		\item $\delta(w_i, q_i) = (u_i, q_{i+1})$ for each $i\in \{1,2,\ldots,k\}$ where $w' = u_1 u_2 \cdots u_k$.
	\end{itemize}
We then see that the language $M(L)$ can be written as
	\[
		M(L)
		=
		\{
		w'\in \Sigma^*
		\mid
		q_0 \to^{(w,w')} q
		\text{ where }
		w\in L\text{ and }q\in A
		\}
	\]
	for each $L\subseteq \Gamma^*$.

\begin{definition}
We say that a string transducer $M = (\Gamma,\Sigma,Q,A,q_0,\delta)$ is \emph{injective} if for each $w'\in  \Sigma^*$, there is at most one word $w\in \Gamma^*$ such that $q_0\to^{(w,w')} q$ with $q\in A$.
    That is, if we view $M$ as a partial map from $\Gamma^*$ to $\Sigma^*$, then it is injective in the usual sense.
\end{definition}

Our main objective in this subsection is to prove the following proposition.

\begin{proposition}\label{prop:string-transducer}
Suppose that we are given an unambiguous limiting ET0L grammar for some language $L\subseteq \Sigma^*$, and an injective string transducer $M = (\Sigma,\Gamma,Q,A,q_0,\delta)$.
Then, it is computable to find an unambiguous limiting ET0L grammar for the language $L' = M(L)$.
\end{proposition}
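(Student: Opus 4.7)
The plan is a product construction between the given unambiguous limiting grammar $E = (\Sigma, V, \{\alpha, \beta, \gamma\}, \alpha\beta\star\gamma, S)$ and the transducer $M = (\Sigma, \Gamma, Q, A, q_0, \delta)$. The idea is to tag each symbol appearing in an intermediate derivation with a pair of transducer states, so that a tag $[p, x, q]$ encodes the promise that the subderivation rooted at $x$ will ultimately produce a terminal word $w \in \Sigma\star$ whose transducer trace from state $p$ ends in state $q$. Together with a fresh start symbol $S'$, these form the new nonterminal alphabet $V' = \{S'\} \cup \{[p, x, q] : x \in V \cup \Sigma,\ p, q \in Q\}$; the new grammar $E'$ has terminal alphabet $\Gamma$, three tables $\alpha', \beta', \gamma'$, and rational control $\alpha'(\beta')\star\gamma'$.

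The new tables propagate tags consistently. Declare $\alpha'(S')$ to be the regular set of tagged strings $[q_0, x_1, p_1][p_1, x_2, p_2]\cdots[p_{k-1}, x_k, q_{\mathrm{acc}}]$ with $x_1 \cdots x_k \in \alpha(S)$ and $q_{\mathrm{acc}} \in A$; declare $\beta'([p, v, q])$ analogously, drawing $x_1 \cdots x_k$ from $\beta(v)$ for $v \in V$, and set $\beta'([p, \sigma, q]) = \{[p, \sigma, q]\}$ for $\sigma \in \Sigma$, reflecting $\beta(\sigma) = \{\sigma\}$. For the terminal step, set $\gamma'([p, v, q])$ with $v \in V$ to be the image under $M$ of $\gamma(v) \cap \Sigma\star$ restricted to transducer runs from $p$ to $q$, and set $\gamma'([p, \sigma, q]) = \{u\}$ exactly when $\delta(\sigma, p) = (u, q)$. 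Each image is a regular language over $V' \cup \Gamma$ and is effectively constructible, by closure of regular languages under finite-state tagging, intersection, and deterministic transducer image. By design, conditions (\ref{def:limiting-et0l/1}) and (\ref{def:limiting-et0l/2}) hold. Condition (\ref{def:limiting-et0l/beta}) is inherited from the corresponding condition for $\beta$ in $E$, since erasing tags sends any string in $\beta'([p,v,q])$ to a non-empty string in $\beta(v)$.

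The equality $L(E') = M(L)$ follows from the bijection sending a derivation in $E'$ to the pair consisting of its tag-forgetting projection (a derivation in $E$) and the induced deterministic transducer run on the resulting $w \in \Sigma\star$. Unambiguity of $E'$ then follows from injectivity of $M$ combined with unambiguity of $E$: if two derivation trees in $E'$ yield the same $u \in \Gamma\star$ via the same control word, then injectivity of $M$ forces the underlying terminal words in $E$ to coincide, unambiguity of $E$ forces the projected trees to coincide, and determinism of $M$ forces the state tags on every node to coincide. The same bijection transports the limiting property from $E$ to $E'$, since every extension of the projected derivation by identity $\beta$-steps lifts uniquely to an extension in $E'$. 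The main obstacle is arranging condition (\ref{def:limiting-et0l/gamma}): many triples $[p, x, q]$ are \emph{sterile}, with $\gamma'([p,x,q]) = \emptyset$, because no terminal string derivable from $x$ drives the transducer from $p$ to $q$. I handle this by a standard cleanup, iteratively pruning the sterile nonterminals and intersecting each table's image with the surviving alphabet. This step is effective, preserves $L(E') = M(L)$ (since sterile nonterminals cannot appear in any successful derivation), preserves unambiguity (restriction cannot create new trees), and preserves the limiting property, producing the desired unambiguous limiting ET0L grammar for $M(L)$.
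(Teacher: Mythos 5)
Your construction is essentially the paper's own proof: both annotate every terminal and nonterminal with a pair of transducer states, fold the initial guess of an accepting run into the $\alpha$-table and the application of $M$ into the $\gamma$-table, and derive unambiguity from injectivity of $M$ together with unambiguity and determinism. The only divergence is in how condition (\ref{def:limiting-et0l/gamma}) of \cref{def:limiting-et0l} is secured --- the paper keeps the sterile tagged symbols and has the final table map them to themselves so that the offending derivations simply never terminate, whereas you prune them --- and both devices work.
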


\begin{proof}
Let $E = (\Sigma,V,T,\mathcal R, \mathcal S)$ be an unambiguous limiting ET0L grammar for the language $L$, and let $M = (\Sigma,\Gamma,Q,A,q_0,\delta)$ be a string transducer.
We are constructing an ET0L grammar $E' = (\Sigma,V',\mathcal R', \mathcal S')$ which recognises the language $L' = M(L)$. The grammar $E'$ is obtained by annotating the nonterminals in $E$ with paths in the string transducer $M$.
We then modify the grammar $E'$ and show that it is unambiguous limiting.
We begin our construction by listing out the nonterminals of $E'$ as follows.

\medskip
\noindent\underline{Nonterminals}:

\smallskip
\noindent
We begin by introducing the initial nonterminals of $E'$ which we write as $\mathcal S'\in V$.
For each nonterminal $v\in V$, and each pair of states $q,q'\in Q$, we introduce a nonterminal $\Sigma_{v,q,q'}\in V'$ which corresponds to a word produced by $E$, from the nonterminal $v$, which is read along a path from state $q$ to $q'$ in $M$.
Similarly, for each terminal $x\in \Sigma$, and each $q,q'\in Q$, we introduce a nonterminal $\Sigma_{x,q,q'}\in V'$ which corresponds to a letter, produced by $E$, which is read along an edge between states $q$ and $q'$ in the string transducer $M$.
To simplify our proof, we allow nonterminals of the form $\Sigma_{x,q,q'}$ where there is no edge from $q$ to $q'$ labelled by $a$.
We only verify that a valid path is being represented at the end of the construction.

We now describe the tables of the grammar $E'$, beginning with an initialisation table as follows.

\medskip
\noindent \underline{Initialisation table}: $\tau_\mathrm{init}$.

\smallskip
\noindent
The table $\tau_\mathrm{init}$ decides on an accepting path in the automaton $M$ and is defined as follows:
\[
    \tau_\mathrm{init}(\mathcal S') = \{ \Sigma_{\mathcal S, q_0,q'} \mid q'\in A \},
\]
and $\tau_\mathrm{init}(v)=v$ for all other nonterminals $v$.
Observe that $\tau_\mathrm{init}$ is a table as each of its replacements are finite sets which are all regular languages.

Such a table `guesses' that the grammar will produce a word which corresponds to a path from $q_0$ to $q'\in A$.
We now describe a modification of the table $\alpha\in T$, and  we modify the tables $\beta$ and $\gamma$ in precisely the same manner.

\medskip
\noindent
\underline{Modifying tables $\alpha,\beta,\gamma\in T$}.

\smallskip
\noindent
We introduce a table $\alpha'\in T'$ such that
\begin{itemize}
    \item
    for each $v\in V$, and $q,q'\in Q$, we have
    \[
        \Sigma_{s_1, q,q_1}
        \Sigma_{s_2, q_1,q_2}
        \Sigma_{s_3, q_2,q_3}
        \cdots
        \Sigma_{s_{k+1}, q_k,q'}
        \in \alpha'(\Sigma_{v,q,q'})
    \]
    for every $q_1,q_2,...,q_k\in Q$, if and only if $s_1 s_2\cdots s_k \in \alpha(v)$.
    Note that we only require that the states of adjacent letters match.
    Thus, it can be seen that the language $\alpha'(\Sigma_{v,q,q'})$ is regular; and
    \item 
    for all other nonterminals $x\in \Sigma'$, we have $\alpha'(x)= x$.
\end{itemize}
We perform the same modification to tables $\beta,\gamma \in T$ to obtain tables $\beta',\gamma'\in T'$, respectively.

\medskip
\noindent
\underline{Observation I}:

\smallskip
\noindent
From the definition of the tables $\tau_\mathrm{init},\alpha',\beta',\gamma'\in T'$, we  see that for every choice of states $q,\penalty51 q',\penalty51q_1,\penalty51q_2,\penalty51...,\penalty51q_k\in Q$, symbols $s_1,s_2,...,s_{k+1}\in \Sigma\cup V$, and $n\in \N$,
\[
        \Sigma_{s_1, q,q_1}
        \Sigma_{s_2, q_1,q_2}
        \Sigma_{s_3, q_2,q_3}
        \cdots
        \Sigma_{s_{k+1}, q_k,q'}
        \in
        \mathcal S' \cdot \bigl(\tau_\mathrm{init}\alpha'(\beta')^n \gamma'\bigr)
\]
if and only if $q=q_0$, $q'\in A$ and
\[
    s_1 s_2 s_3 \cdots s_{k+1} \in \mathcal S \cdot \bigl(\alpha \beta^n \gamma\bigr).
\]
All that now remains is to introduce one additional table to apply the action of the string transducer $M$.

\medskip
\noindent
\underline{Final table}: $\tau_\mathrm{final}$.

\smallskip
\noindent
The additional table $\tau_\mathrm{final}$ is defined as follows.
For each $\Sigma_{a,q,q'}$ with $a\in \Sigma$, we define
\[
    \tau_\mathrm{final}(\Sigma_{a,q,q'})
    =
    \begin{cases}
        w & q\to^{(a,w)} q'\\
        \Sigma_{a,q,q'} & \text{otherwise}.
    \end{cases}
\]
For all other nonterminals $v\in V$, we define $\tau_\mathrm{finish}(v)=v$.

\medskip
\noindent
\underline{Observation II}.

\smallskip
\noindent
For each $w\in \Gamma^*$, we have
\[
    w \in \mathcal S' \cdot \bigl( \tau_\mathrm{init} \alpha' (\beta')^n \gamma' \tau_\mathrm{finish} \bigr)
\]
if and only if there is some $w' \in \Sigma^*$ such that
\[
    w' \in \bigl( \alpha \beta^n \gamma \bigr)
\]
with $q_0 \to^{(w',w)} q$ for some $q\in A$.

\medskip
\noindent
\underline{Modifying the grammar $E'$}.

\smallskip
\noindent
We construct the tables $\alpha'' = \tau_\mathrm{init}\alpha'$, $\beta'' = \beta'$ and $\gamma'' = \gamma' \tau_\mathrm{finish}$.
Then, with the rational control $\mathcal R = \alpha'' (\beta'')^*\gamma''$, we have an ET0L grammar for the language $L' = M(L)$.
Moreover, the grammar $E'$ is unambiguous because $E$ is unambiguous, and it is limiting by 
 the construction of the tables.
\end{proof}

We use \cref{prop:string-transducer} to prove \cref{prop:closure-under-gset} which is then used in the proof of our main theorem.
In order to prove this proposition, we first require some technical lemmas given as follows.
Recall that a subset $W\subset \Gamma^+$ is an \emph{antichain} with respect to prefix order if for each choice of words $u,v\in W$, the word $u$ is not a proper prefix of $v$.

\begin{lemma}\label{lem:f_is_detfsm}
	Let $W\subset \Gamma^+$ be a finite antichain with respect to prefix order.
	For each word $w\in W$, we fix a word $x_w\in \Sigma^*$.
	Define a map $f\colon \mathcal{P}(\Gamma^*) \to \mathcal{P}(\Sigma^*)$ as
	\[
		f(L)
		=
		\{
		x_{w_1} x_{w_2} \cdots x_{w_k} \in \Sigma^*
		\mid
		w_1 w_2 \cdots w_k \in L
		\text{ where each }
		w_i\in W
		\}.
	\]
	Then, there is a string transducer $M = (\Gamma,\Sigma, Q, A, q_0, \delta)$ such that $f(L) = f(L\cap W^*) = M(L)$.
    Moreover, such a string transducer is computable from the set $W$ and words $x_w\in \Sigma^*$.
\end{lemma}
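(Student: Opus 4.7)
The plan is to build $M$ as a trie-based parser: while consuming a string in $\Gamma^*$ left-to-right, I track which prefix of a potential element of $W$ has been read so far, and whenever I have just completed an element $w\in W$ I emit $x_w$ and reset. The prefix-antichain assumption makes this parsing fully deterministic, because no element of $W$ is a proper prefix of another, so the end of each factor is unambiguous.

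Concretely, let $P\subset \Gamma^*$ be the finite set of proper prefixes of elements of $W$ (including $\varepsilon$), introduce a fresh sink symbol $\bot$, and set $Q = P\cup \{\bot\}$, $q_0 = \varepsilon$, and $A = \{\varepsilon\}$. The transition function $\delta\colon \Gamma\times Q\to\Sigma^*\times Q$ is defined on $(a,u)$ with $u\in P$ by three cases: if $ua\in W$, set $\delta(a,u) = (x_{ua}, \varepsilon)$; else if $ua\in P$, set $\delta(a,u) = (\varepsilon, ua)$; else set $\delta(a,u) = (\varepsilon,\bot)$. On the sink, $\delta(a,\bot) = (\varepsilon,\bot)$ for every $a\in \Gamma$. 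Each case is algorithmically checkable from $W$, so $M$ is computable from $W$ and the assignment $w\mapsto x_w$.

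Correctness reduces to the following claim, which I would prove by induction on $|w|$: for $w\in \Gamma^*$ and $u\in \Sigma^*$, $q_0 \to^{(w, u)} \varepsilon$ holds if and only if $w\in W^*$, in which case the factorization $w = w_1 w_2 \cdots w_k$ with $w_i\in W$ is uniquely determined and $u = x_{w_1} x_{w_2}\cdots x_{w_k}$. The antichain hypothesis rules out alternative factorizations and hence alternative accepting runs. Once this claim is in hand, $M(L) = \{x_{w_1}\cdots x_{w_k}\mid w_1 w_2\cdots w_k\in L\cap W^*\}$, which by definition equals $f(L\cap W^*)$, and the equality $f(L) = f(L\cap W^*)$ is immediate from the defining formula for $f$ since strings in $L\setminus W^*$ contribute nothing to $f(L)$.

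I do not anticipate a substantive obstacle. The only delicate point is the unique-factorization property used in the correctness claim, which rests squarely on $W$ being a prefix antichain; this is precisely what permits the map $f$ to be implemented by a deterministic string transducer rather than only by a nondeterministic one.
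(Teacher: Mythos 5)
Your construction is essentially identical to the paper's: the same trie of proper prefixes of $W$ as the state set, the same reset-to-$\varepsilon$-and-emit-$x_w$ transition upon completing a factor, the same sink/fail state, and $A=\{q_\varepsilon\}$, with correctness resting on the unique factorisation over the prefix antichain. The only differences are cosmetic — you spell out the inductive correctness claim that the paper leaves as ``clear from the construction,'' and you omit the paper's separate (and inessential) treatment of the degenerate case $W=\emptyset$.
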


The proof below is standard. In particular, the existence of such an automaton is known since $W$ is a code (cf.~\cite{berstel1985theory}), see for example \cite[p.~200-2]{berstel1985theory} in which a similar automaton is constructed and called a `decoding automaton'.

\begin{proof}
	Firstly we observe that if $W = \emptyset$, then $f(L) = \emptyset$ for each language $L\subseteq \Gamma^*$.
	In this case, any such string transducer with $A = \emptyset$ satisfies the statement of the lemma.
	Thus, in the remainder of this proof, we assume that $W\neq \emptyset$.

	Let $w\in \Gamma^*$ be a word for which $w\in W^*$.
	Then, since $W$ is a finite antichain with respect to the prefix order, there is a unique factorisation of $w$ as
	$
		w =w_1 w_2 \cdots w_k
	$
	where each $w_i\in W$.

	We construct a string transducer $M = (\Gamma,\Sigma, Q, A, q_0, \delta)$ as follows.
	For each proper prefix $u \in \Gamma^*$ of a word $w\in W$, we introduce a state $q_u \in Q$.
	The initial state is $q_0 =q_\varepsilon$, and the set of accepting states is $A = \{q_\varepsilon\}$.
	Further, our automaton has one additional state $q_\mathrm{fail}$ which is a fail state; that is,
	\[
		\delta(g, q_\mathrm{fail}) = (\varepsilon, q_\mathrm{fail})
	\]
	for each $g\in \Gamma$.
	We then specify the remaining transitions as follows.

	For each state $q_u$ with $u\in \Gamma^*$, and each $g\in \Gamma$, we define the transition
	\[
		\delta(g, q_u)
		=
		\begin{cases}
			(x_w, q_\varepsilon)           & \text{if } w = ug \in W,                                \\
			(\varepsilon, q_{ug})          & \text{if } ug\text{ is a proper prefix of some }w\in W, \\
			(\varepsilon, q_\mathrm{fail}) & \text{otherwise}.
		\end{cases}
	\]
	The string transducer $M$ is now completely specified.
	It is clear from the construction that $f(L)=M(L)$ for each $L\subseteq\Gamma^*$.
    Moreover, one observes that every step of this construction is computable.
\end{proof}

\begin{lemma}\label{lem:example-antichain}
	Suppose that $G$ is an infinite group with a finite monoid generating set $X$.
	Fix a finite number of words $u_1,u_2,\ldots,u_k\in X^*$.
	Then there exists a choice of non-empty words $w_1,w_2,\ldots,w_k\in X^*\setminus \{\varepsilon\}$, such that each $\overline{w_i}=1$ and
	\[
		W = \{
		w_1 u_1, w_2 u_2,\ldots, w_k u_k
		\}
	\]
	is an antichain in prefix order; that is, for each choice of words $x,y\in W$, the word $x$ is not a proper prefix of $y$.
    Moreover, such a choice of set $W$ is computable if the word problem for $G$ is computable.
\end{lemma}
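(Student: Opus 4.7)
The plan is to distinguish the $w_i u_i$ from one another by encoding the index $i$ into a distinctive, controllable prefix of $w_i$, relying on the group structure of $G$ rather than on any length-arithmetic argument. A preliminary observation is that $|X| \geq 2$: indeed, if $X = \{x\}$, then every element of $G$ is of the form $\overline{x}^n$ for $n \geq 0$, and for $\overline{x}^{-1}$ to be represented by such a word one is forced to have $\overline{x}$ of finite order, contradicting $|G| = \infty$.

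Granted $|X| \geq 2$, I would fix a common length $n$ with $|X|^n \geq k$ and choose $k$ pairwise distinct words $p_1, \ldots, p_k \in X^*$, all of length $n$; any two such words are automatically prefix-incomparable. Since $X$ is a monoid generating set of the group $G$, for each $i$ there exists a word $q_i \in X^*$ satisfying $\overline{q_i} = \overline{p_i}^{-1}$; define $w_i = p_i q_i$. Then $w_i \neq \varepsilon$ because $p_i \neq \varepsilon$, and $\overline{w_i} = \overline{p_i}\, \overline{p_i}^{-1} = 1$ by construction.

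To verify that $W = \{w_1 u_1, \ldots, w_k u_k\}$ is an antichain, suppose toward a contradiction that $w_i u_i$ is a proper prefix of $w_j u_j$ for some $i \neq j$. Since $w_i u_i$ and $w_j u_j$ begin with $p_i$ and $p_j$ respectively, and these have a common length $n$, the first $n$ letters of $w_j u_j$ must coincide with $p_i$, forcing $p_i = p_j$ and contradicting their distinctness. For the computability claim, the $p_i$ are enumerated directly; for each $i$, dovetail through all words in $X^*$, testing $\overline{p_i q_i} = 1$ via the word problem oracle for $G$ until a witness $q_i$ is found. Halting of this search is guaranteed by the existence argument above.

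The principal obstacle sidestepped by this approach is the tempting but flawed length-matching strategy, which would make every $|w_i u_i|$ equal in order to force the antichain condition. The set of lengths of non-trivial identity words in $X^*$ forms a numerical semigroup whose greatest common divisor $d$ may exceed $1$; when the $|u_i|$ fall in distinct residue classes modulo $d$, no two of the words $w_i u_i$ can even be made the same length. Replacing length-matching with the prefix-distinguishing trick — leveraging only the existence of a representative of $\overline{p_i}^{-1}$ in $X^*$ — avoids this obstruction entirely.
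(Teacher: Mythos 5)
Your proof is correct, but it follows a genuinely different route from the paper's. The paper constructs $w_i = \alpha_i\beta_i$ where the $\alpha_i$ are geodesics of strictly increasing lengths (each longer than the previous $w_{i-1}$) and the $\beta_i$ complete them to identity words; the separating invariant is then the length of the longest geodesic prefix, which is stable under extension because no prefix of a word can be geodesic once it passes the trivial segment $w_i$. You instead encode the index $i$ directly into the first $n$ letters: you observe that $G$ infinite forces $|X|\geq 2$, pick $k$ distinct length-$n$ words $p_i$ (with $n\geq 1$), and complete each to an identity word $w_i = p_i q_i$ by a search for $q_i$ with $\overline{q_i} = \overline{p_i}^{-1}$, which exists because $X$ generates $G$ as a monoid. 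The antichain verification then reduces to comparing length-$n$ prefixes, since every $w_iu_i$ has length at least $n$ and begins with $p_i$. Both arguments use the infiniteness of $G$ (the paper to obtain geodesics of arbitrary length, you to obtain $|X|\geq 2$) and both are computable given a word-problem oracle. Your version is more elementary — it avoids geodesics and any metric reasoning entirely — and the prefix comparison is arguably more transparent than the paper's geodesic-prefix invariant; your closing remark correctly identifies why the naive length-equalisation strategy can fail (parity obstructions when identity-word lengths all lie in a proper subsemigroup of $\N$), which is precisely the pitfall both proofs are designed to avoid.
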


\begin{proof}
	We begin by constructing the words $w_1,w_2,\ldots,w_k$ as follows.

	Let $\alpha_1 \in X$ be a nontrivial generator, that is, $\overline{\alpha_1}\neq 1$; then let $\beta_1 \in X^*$ be a geodesic with $\overline{\alpha_1\beta_1} = 1$ (If $X$ is a symmetric generating set, then we may choose $\beta_1 = \alpha_1^{-1}$.)
	From this selection, we  define $w_1 = \alpha_1\beta_1$.
	We now choose the words $w_2,w_3,\ldots,w_k$ sequentially as follows.

	For each $i\geq 2$, we choose a geodesic $\alpha_i\in X^*$ with length $|\alpha_i|= |\alpha_{i-1}\beta_{i-1}|+1$.
	We then choose a geodesic $\beta_i\in X^*$ such that $\overline{\alpha_i\beta_i}=1$ (If $X$ is a symmetric generating set, then we may choose $\beta_i = \alpha_i^{-1}$.)
	Then, define $w_i = \alpha_i\beta_i$.

	We have now selected a sequence of words $w_1,w_2,\ldots,w_k$.
	For each word $w_i$, let $\gamma_i$ denote the longest prefix which is a geodesic.
	Then
	\[
		|\gamma_{i-1}| < |w_{i-1}| < |\alpha_{i}| \leq |\gamma_{i}|
	\]
	for each $i\in\{2,3,\ldots,k\}$.
	Thus, $|\gamma_1|< |\gamma_2|< \cdots< |\gamma_k|$ and $|\gamma_i| < |w_iu_i|$ for each $i$.

	We now see that, if $w_i u_i$ is a proper prefix of some word $v\in X^*$, then $\gamma_i$ is also the longest prefix of $v$ which is a geodesic.
	Hence, we conclude that the set
	\[
		W = \{
		w_1 u_1, w_2 u_2,\ldots, w_k u_k
		\}
	\]
	is an antichain as required.
    All computations are possible as long as the word problem is computable for $G$.
\end{proof}

\begin{proposition}\label{prop:closure-under-gset}
Suppose that the group $G$ has a finite symmetric generating set $X$, and that $M\subseteq G$ is a subset of the group for which
$
    L = \{
        w\in X\star
    \mid
        \overline{w}\in M
    \}
$
is an unambiguous limiting ET0L language.
Then, for each subgroup $H \leq G$ with finite generating set $Y \subseteq H$, the set
$
    L'=\{
        w \in Y\star
    \mid
        \overline{w} \in M \cap H
    \}
$
forms an unambiguous limiting ET0L language.
Moreover, an unambiguous limiting ET0L grammar for $L'$ is computable from such a grammar for $L$.
\end{proposition}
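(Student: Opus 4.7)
The plan is to exhibit $L'$ as the image of $L$ under an injective string transducer and then to invoke \cref{prop:string-transducer}, which transfers the unambiguous limiting ET0L property through such transducers. For each $y\in Y$, fix a word $u_y\in X^*$ with $\overline{u_y}=y$ in $G$; this is possible because $X$ generates $G$ and $Y\subseteq H\leq G$, and effective given the input data.

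To meet the hypotheses of \cref{lem:f_is_detfsm}, the codewords must form a finite antichain in prefix order, and the naive choice $\{u_y\mid y\in Y\}$ may fail this. Applying \cref{lem:example-antichain} to the tuple $(u_y)_{y\in Y}$ yields, for each $y$, a nonempty word $w_y\in X^*$ with $\overline{w_y}=1$ such that
\[
    W=\{\,w_y u_y \mid y\in Y\,\}\subset X^+
\]
is a finite antichain. The construction in that lemma produces words with pairwise distinct longest geodesic prefixes, so the elements of $W$ are distinct and the assignment $w_y u_y\mapsto y$ is a well-defined bijection $W\to Y$. Moreover $\overline{w_y u_y}=y$, since $\overline{w_y}=1$.

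Feeding $W$ and the codewords $x_{w_y u_y}=y$ into \cref{lem:f_is_detfsm} produces a string transducer $T$ whose induced map on languages satisfies
\[
    T(L)=\bigl\{\,y_1 y_2\cdots y_k\in Y^* \bigm| (w_{y_1}u_{y_1})(w_{y_2}u_{y_2})\cdots (w_{y_k}u_{y_k})\in L\,\bigr\}.
\]
Since $\overline{(w_{y_1}u_{y_1})\cdots (w_{y_k}u_{y_k})}=\overline{y_1 y_2\cdots y_k}$ in $G$ and this element automatically lies in $H$, the concatenation on the right belongs to $L$ precisely when $\overline{y_1\cdots y_k}\in M\cap H$, i.e.\ when $y_1 y_2\cdots y_k\in L'$. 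Hence $T(L)=L'$. Furthermore, $T$ is injective: the antichain property gives a unique factorisation of any word in $W^*$ into elements of $W$, and bijectivity of the labelling $W\to Y$ then makes the preimage of any accepted output unique.

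Applying \cref{prop:string-transducer} to $T$ together with the given unambiguous limiting ET0L grammar for $L$ now produces an unambiguous limiting ET0L grammar for $L'=T(L)$, and every step of the construction (choosing $u_y$, padding via \cref{lem:example-antichain}, building $T$ via \cref{lem:f_is_detfsm}, and invoking \cref{prop:string-transducer}) is effective. The main obstacle is guaranteeing the injectivity of $T$, since without it \cref{prop:string-transducer} does not apply; this is exactly what the padding with trivial words provided by \cref{lem:example-antichain} secures.
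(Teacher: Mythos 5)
Your proposal is correct and follows essentially the same route as the paper: fix representatives $u_y\in X^*$ for the generators of $H$, pad them via \cref{lem:example-antichain} into an antichain of trivially-prefixed codewords, build the injective decoding transducer with \cref{lem:f_is_detfsm}, and conclude with \cref{prop:string-transducer}. Your explicit justification of the injectivity of the transducer (unique factorisation over the antichain plus bijectivity of the labelling) is a welcome elaboration of a step the paper only asserts.
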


\begin{proof}
We write the set $Y = \{y_1,y_2,...,y_m\}$. Then, for each $y_i\in Y$, fix a non-empty word $u_i\in X^*$ for which $\overline {y_i} = \overline{u_i}$.
From \cref{lem:example-antichain}, we then see that there is a choice of words $w_1,w_2,...,w_m$, with each $\overline{w_i}=1$, such that the words $w_1 u_1$, $w_2 u_2$, ..., $w_m u_m$ form an antichain with respect to prefix order.
From our choice of words $u_i$ and $w_i$, we then see that
\[
    L'
    =
    \left\{
        y_{i_1}
        y_{i_2}
        \cdots
        y_{i_k} \in Y^*
    \middle|
        (w_{i_1} u_{i_1})
        (w_{i_2} u_{i_2})
        \cdots
        (w_{i_k} u_{i_k})
        \in L
    \right\}.
\]
Thus, from \cref{lem:f_is_detfsm}, we see that there is a string transducer $M$ for which $L' = M(L)$.
Moreover, we see that this string transducer is injective. Our result then follows from \cref{prop:string-transducer}.
Moreover, we see that every step of our construction is computable.
\end{proof}

\subsection{Generating Functions}\label{sec:gfun}

A multivariate generating function is a generalisation of the ordinary generating function to several variables. It is used to study sequences indexed by multiple indices, such as $a_{i_1, i_2, \ldots, i_k}$.
Formally, for a $k$-dimensional sequence $\{a_{i_1, i_2, \ldots, i_k}\}$, the multivariate generating function is defined as
\[
g(x_1, x_2, \ldots, x_k) = 
\sum_{i_1=0}^{\infty} \sum_{i_2=0}^{\infty} \cdots \sum_{i_k=0}^{\infty}
a_{i_1, i_2, \ldots, i_k} \, x_1^{i_1} x_2^{i_2} \cdots x_k^{i_k}.
\]
Multivariate generating functions provide a compact way to encode and manipulate multidimensional sequences. They are especially useful in combinatorics, probability, and the study of systems with several interacting parameters (see, \cite{book}). The set of such functions is equipped with a natural notion of convergence. 
Let $\{g_n(x_1, \ldots, x_k)\}_{n \ge 0}$ be a sequence of multivariate generating functions
\[
g_n(x_1, \ldots, x_k)
= \sum_{i_1, \ldots, i_k \ge 0} a^{(n)}_{i_1, \ldots, i_k}
  x_1^{i_1} \cdots x_k^{i_k}.
\]
Then  $g_n \to g$ if, for each $i_1,i_2,\ldots,i_k$, we have 
  $a^{(n)}_{i_1, \ldots, i_k} \to a_{i_1, \ldots, i_k}. $
  
We are now ready to prove our theorem on the generating functions of ET0L languages as follows.

\TheoremGFun

\begin{remark}
    The coefficients of the generating functions $g_n$ correspond to the number of words in the set $S \cdot \alpha \beta^n \in (\Sigma\cup V)^*$.
    We know that the count of all such words converges from properties 4 and 5 of \cref{def:limiting-et0l}.
    Hence, the limit in the statement of the theorem exists.
\end{remark}

\begin{proof}
Let $E = (\Sigma,V,T,\mathcal{R},S)$ be an ET0L language as in \cref{def:limiting-et0l} where $V = \{X_1=S,X_2,...,X_k\}$, $T = \{\alpha,\beta,\gamma\}$ and $\mathcal{R} = \alpha\beta\star\gamma$.
Suppose that
\[
    \alpha\colon
    \left\{
    \begin{aligned}
        X_1 &\mapsto L_{\alpha,1}\\
        X_2 &\mapsto L_{\alpha,2}\\
        &\rvdots\\
        X_k &\mapsto L_{\alpha,k}
    \end{aligned}
    \right.
    ,\qquad
    \beta\colon
    \left\{
    \begin{aligned}
        X_1 &\mapsto L_{\beta,1}\\
        X_2 &\mapsto L_{\beta,2}\\
        &\rvdots\\
        X_k &\mapsto L_{\beta,k}
    \end{aligned}
    \right.
    \qquad\text{and}\qquad
    \gamma\colon
    \left\{
    \begin{aligned}
        X_1 &\mapsto L_{\gamma,1}\\
        X_2 &\mapsto L_{\gamma,2}\\
        &\rvdots\\
        X_k &\mapsto L_{\gamma,k}
    \end{aligned}
    \right.
\]
where each $L_{\alpha,i}$, $L_{\beta,i}$ and $L_{\gamma,i}$ is a regular language over $V\cup \Sigma$.

In the remainder of this proof, we write $\mathbf{x}$ for the tuple of variables $(x_1,x_2,...,x_k)$ where each variable $x_i$ corresponds to the variable $X_i \in V$.
Suppose that $m=|\Sigma|$, we then write $\mathbf{y}$ for the tuple of variables $(y_1,y_2,...,y_m)$ where each variable $y_i$ corresponds to a letter $\sigma_i$ in $\Sigma = \{\sigma_1, \sigma_2, ...,\sigma_m\}$.

For the regular languages $L_{\alpha,i}$, $L_{\beta,i}$ and $L_{\gamma,i}$ we write $h_{\alpha,i}(\mathbf{x},\mathbf{y})$, $h_{\beta,i}(\mathbf{x},\mathbf{y})$ $h_{\gamma,i}(\mathbf{x},\mathbf{y})$ for their generating functions, respectively.
It is well-known that the multivariate generating function of regular languages are rational (this follow from \cite[p.~125]{salomaa1990formal}).

Then, for each $h_{\gamma,i}$, we define a function $H_{\gamma,i}(z)$ as
\[
    H_{\gamma,i}(z)
    =
    h_{\gamma,i}(
            \underbrace{0,0,...,0}_{k\text{ times}},
            \underbrace{z,z,...,z}_{m\text{ times}}
        ).
\]
We then see that the generating function can be written as
\[
    f(z) = g(
        H_{\gamma,1}(z),
        H_{\gamma,2}(z),...,
        H_{\gamma,k}(z),
        z,z,...,z
    )
\]
with
\[
    g(\mathbf{x},\mathbf{y})
    =
    \lim_{n\to\infty} g_n(\mathbf{x},\mathbf{y})
\]
where
\begin{align*}
    g_0(\mathbf{x},\mathbf{y})&= h_{\alpha,1}(\mathbf{x},\mathbf{y})
    \quad
    \text{ and }
    \\g_{n+1}(\mathbf{x},\mathbf{y}) &= g_{n}(
        h_{\beta,1}(\mathbf{x},\mathbf{y}),
        h_{\beta,2}(\mathbf{x},\mathbf{y}),
        ...,
        h_{\beta,k}(\mathbf{x},\mathbf{y}),
        \mathbf{y}
    )
\end{align*}
for each $n \geq 0$.
\end{proof}

\section{Main theorem}\label{sec:main}

The content in this section is devoted to proving our main theorem, stated as follows.

\TheoremMain

We begin by noting that, from \cref{prop:closure-under-gset,lem:fg.ss.ba_overgroup}, it is sufficient to consider the case of $G$  a bounded automata group with a finite generating set which can be partitioned as $X = F\cup D$ where $F \subset \FinTd$ is a set of finitary automorphisms and $D \subset \DirTd$ is a set of directed automorphisms.
Thus, in the remainder of this section, we will assume so without loss of generality.

In the theorem statement, we are given an eventually periodic ray $\eta = a b^\omega$, specified as two finite-length words $a,b\in C^*$.
The goal is to construct two ET0L grammars, $E = (X,V,T,\mathcal R, S)$ and $E' = (X,V,T,\mathcal R, S')$, for the languages $\WP(G,X,\Stab(\eta))$ and $X^*\setminus \WP(G,X,\Stab(\eta))$, respectively.
Moreover, at the end of our construction, we point out that these grammars are unambiguous limiting.
The only difference between the two grammars is their starting symbol.

In order to describe the nonterminals in our grammar, we need to define a finite set of eventually periodic rays  described by a finite set of pairs $I \subset C^*\times C^*$ as follows.

\begin{lemma}\label{lem:setI}
Let $I\subseteq C^* \times C^*$ be a finite set  such that, for each
\[
    \zeta \in \{\eta\} \cup \mathrm{spine}(D)\cup (\mathrm{spine}(D)\cdot D)
\]
where $\mathrm{spine}(D) = \{ \mathrm{spine}(\delta)\mid \delta\in D\}$,
there exists some $(u,v)\in I$ such that $\zeta = uv^\omega$.
Then it is effectively computable to construct such a set $I$, where additionally
\begin{enumerate}
    \item there exists some $\ell \in \mathbb N$ such that $|u|=|v|=\ell$ for each $(u,v)\in I$;
    \item if $(u,v),(u',v')\in I$ with $(u,v)\neq (u',v')$, then $u\neq u'$;
    \item for each $x\in D$, there is some $(u,v)\in I$ such that $\mathrm{spine}(x) = u v^\omega$ with $x\at u = x\at uv$; and
    \item for each finitary automorphism $f\in F$, we have $\ell \geq \mathrm{depth}(f)$.
\end{enumerate}
\end{lemma}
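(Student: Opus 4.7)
The first step will be to observe that every ray in the finite set
$Z := \{\eta\} \cup \mathrm{spine}(D) \cup (\mathrm{spine}(D)\cdot D)$
is eventually periodic and admits a computable representation $\zeta = u_\zeta v_\zeta^\omega$. The case $\eta = ab^\omega$ is given by hypothesis; for $\mathrm{spine}(\delta)$ with $\delta \in D$ we appeal to \cref{lemma:spine is eventualy periodic}; and for $\mathrm{spine}(\delta) \cdot \delta'$ we use the observation, made just after that proposition, that $\delta'$ acts as a finitary automorphism on any ray outside its (finitely many, eventually periodic) own spines, and therefore sends an eventually periodic ray to an eventually periodic one. All these representations are effectively computable from the finite descriptions of the automorphisms.

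The core step is then a simultaneous normalisation to a common length. I will fix a positive integer $\ell$ satisfying
(a) $\ell$ is a common multiple of $\{|v_\zeta| : \zeta \in Z\}$;
(b) $\ell \geq \max_{\zeta\in Z}|u_\zeta|$ and $\ell \geq \max_{f\in F}\mathrm{depth}(f)$; and
(c) the distinct elements of $Z$ already differ within their first $\ell$ letters.
Condition (c) can be achieved effectively because $Z$ is finite and its elements are pairwise distinct, so the maximum first-difference position between any two rays of $Z$ is finite and computable. For each $\zeta \in Z$, I then set $u'_\zeta$ to be the length-$\ell$ prefix of $\zeta$ and $v'_\zeta$ to be the block of the next $\ell$ letters. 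Conditions (a) and (b) guarantee that $\zeta$ has period $\ell$ starting from position $\ell+1$, so indeed $\zeta = u'_\zeta (v'_\zeta)^\omega$. Taking $I := \{(u'_\zeta, v'_\zeta) : \zeta \in Z\}$ then immediately yields properties (1), (2) and (4) of the statement.

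The only remaining point is property (3). Given $x \in D$, let $(u, v) = (u_{\mathrm{spine}(x)}, v_{\mathrm{spine}(x)})$ be its canonical representation and write $\ell = |u| + r|v| + s$ with $0 \leq s < |v|$, so that $u' = u v^r w$, where $w$ is the length-$s$ prefix of $v$. Because $\ell$ is itself a multiple of $|v|$, writing $2\ell = |u| + r'|v| + s'$ yields the same remainder $s' = s$, hence $u'v' = u v^{r'} w$ with the same trailing $w$. Applying \cref{lemma:spine is eventualy periodic} with $k = r$ and with $k = r'$ respectively gives $x\at{u'} = x\at{u w} = x\at{u'v'}$, as required. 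I do not expect any real obstacle in this argument; the only subtle bookkeeping is the phase-matching needed for (3), which is exactly what condition (a) on $\ell$ is designed to provide.
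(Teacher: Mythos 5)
Your proof is correct, and its core coincides with the paper's: normalise every ray in the finite set $Z$ to a pair $(u,v)$ with $|u|=|v|=\ell$, where $\ell$ is a common multiple of all the periodic-segment lengths that also dominates the initial-segment lengths and the depths of the finitary generators. Two local divergences are worth noting. For property~(2) the paper does not enlarge $\ell$ until prefixes separate the rays; instead it first normalises to a common length $\ell$ and then replaces each pair $(u,v)$ by $(uv,vv)$, which forces distinct pairs to have distinct first components for the purely combinatorial reason that $uv=u'v'$ with all four words of length $\ell$ implies $(u,v)=(u',v')$ --- this sidesteps any need to compute first-difference positions between rays, although your computation of those positions is perfectly legitimate for eventually periodic words (the first disagreement, if any, occurs within $\max(|u|,|u'|)+\mathrm{lcm}(|v|,|v'|)$ letters). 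For property~(3) the paper folds $|u_x|$ and $|v_x|$ into the lcm defining the final $\ell$ and leaves the verification essentially implicit, whereas your phase-matching argument --- observing that $\ell$ and $2\ell$ leave the same residue $s$ after subtracting $|u|$ modulo $|v|$, and then applying \cref{lemma:spine is eventualy periodic} twice with $j=s$ --- makes explicit why any $\ell$ that is a multiple of $|v_x|$ and at least $|u_x|$ already suffices; on this point your write-up is, if anything, more complete than the paper's. Both routes are effective for the same underlying reasons, so I see no gap.
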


\begin{proof}
For the moment, if we ignore properties 1--4, it is clear that we may construct such a set $I$ from a description of the generating set of bounded automaton automorphisms $X$.
In particular, given generators $x,y\in X$, it is computable to check if they are not finitary (i.e.~there are no non-trivial cycles among the restrictions in their descriptions), and it is also computable to find the initial and periodic segment of their spines (it follows from the proof of \cref{lemma:spine is eventualy periodic}).
Moreover, the action of $y$ on the spine of $x$ can then be computed using a finite amount of memory.

In the remainder of this proof, we show how to modify such a set so that it satisfies each of the desired properties of the lemma.

\medskip
\noindent
\underline{Property 1}:
Let $m\geq 0$ be the constant defined as
\[
    m
    =
    \max\{
        |u|
    \mid
        (u,v) \in I
    \}.
\]
Now let $\ell\geq 1$ be defined as
\[
    \ell = \mathrm{lcm}(
        \{|v| \mid (u,v)\in I\}
        \cup
        \{m\}
    ).
\]
We now construct a finite set $I'$ as follows.
For each $(u,v)\in I$, we define $u'$ as the length-$\ell$ prefix of $uv^\omega$, and we define $v'$ to be a cyclic permutation of the word $v$ for which $u'(v')^\omega = uv^\omega$.
We add $(u',(v')^{\ell/|v'|})$ to the set $I'$.

We now see that if $(a,b)\in I'$, then $|a|=|b|=\ell$ and that $I'$ represents the same elements as $I$.
We also see that $I'$ is minimal since for each  $(a,b),(a',b')\in I'$, we have $ab^\omega = a'(b')^\omega$ if and only if $(a,b)=(a',b')$.
The above steps are computable.
Thus, given a set $I$, there is an algorithm that can construct the set $I'$.

We replace $I$ with $I'$, then in the remainder of this proof, we assume that there is some $\ell\geq 1$ such that for each $(u,v) \in I$, we have $|u|=|v|=\ell$.

\medskip
\noindent
\underline{Property 2}:
For each $(u,v),(u',v')\in I$ with $(u,v)\neq (u',v')$, we have $uv\neq u'v'$.
We construct a set $I'\subset C^*\times C^*$ as
\[
    I'
    =
    \{
        (uv,vv)
    \mid
        (u,v)\in I
    \}.
\]
We note that the set $I'$ has properties 1 and 2.
Moreover, given a set $I$, there is an algorithm which can construct the set $I'$.
Thus, we replace $I$ with $I'$, and $\ell$ with $2\ell$ we may assume that our set $I$ satisfies properties~1 and~2.

\medskip
\noindent
\underline{Properties 3}:
By \cref{lemma:spine is eventualy periodic}, for each $x\in D$, we can choose words $u_x,v_x \in C^*$ such that $\mathrm{spine}(x)=u_x v_x^\omega$ with $x\at u_x = x\at u_x v_x$.
\smallskip

\noindent
\medskip
\noindent
\underline{Properties 4}:
We define a constant $\ell'\geq 1$ as
\[
    \ell'
    =
    \mathrm{lcm}
    \left(
            \{ \ell \}
            \cup
            \{ |u_x| \mid x\in D \}
            \cup
            \{ |v_x| \mid x\in D \}
            \cup
            \{ \mathrm{depth}(f) \mid f\in F \}
    \right).
\]
We now define a set $I'$ as
\[
    I'
    =
    \{
        (uv^{(\ell'/\ell) - 1}, v^{\ell'/\ell})
    \mid
        (u,v)\in I
    \}.
\]
We then see that this new set $I'$ satisfies the properties~1 and~2, and also satisfies property~3.

Observe that the above steps are computable, that is, given a set $I$ and a description of the generators $D$, there is an algorithm which can construct the set $I'$.
Thus, after replacing $I$ with $I'$ and $\ell$ with $\ell'$, we may assume that $I$ satisfies all of our desired properties.
\end{proof}

Our grammars have three tables $T = \{\tau_\mathrm{init}, \tau_\mathrm{up}, \tau_\mathrm{finish}\}$ and rational control $\mathcal{R} = \tau_\mathrm{init} (\tau_\mathrm{up})^* \tau_\mathrm{finish}$.

The construction of the tables will be based on directional depth  as defined in \cref{def:compdepth}. To proceed with it, we introduce the following tool.

\begin{definition}\label{def:decorated-words}
Let $w = s_1s_2\cdots s_k \in X^*$, where each $s_i \in X$, and let $\zeta\in C^*\cup C^\omega$.

Define the word $w$ \emph{decorated with respect to} $\zeta$ as
\[
    \Decorate(\zeta,w)\coloneqq
    s_1^{(a_1)}
    s_2^{(a_2)}
    \cdots
    s_k^{(a_k)}
\quad\text{ where }\quad
    a_i = \CompDepth(\zeta_i, s_i) 
\]
with $\zeta_1 = \zeta$ and $\zeta_{i+1} = \zeta_{i}\acts s_{i}$ for each $i\in \{1,2,...,k\}$.
\end{definition}

Before proceeding with the construction, let us give an overview of how it is intended to work.
Suppose $w = s_1 s_2\cdots s_k$ is a word generated by the grammar $E$ with
\begin{equation}\label{eq:generating-word}
    \Decorate(\zeta,w)\coloneqq
    s_1^{(a_1)}
    s_2^{(a_2)}
    \cdots
    s_k^{(a_k)}.
\end{equation}
Then, when producing  $w$, our grammar fills in the letters $s_i$ in decreasing order of $a_i$.
In particular, let $\ell\in \N$ be the constant derived in \cref{lem:setI}, and suppose that $A\in \N$ is chosen such that $\lceil a_i/\ell\rceil \leq A+3$ for each finite $a_i < \infty$.
Then, our grammar will produce the word $w$ as
\[
    w \in S \cdot
        \tau_\mathrm{init} \,
        \left(\tau_\mathrm{up}\right)^{A}
        \tau_\mathrm{final}.
\]
In particular: \textit{(note that in the following $a_i$ are as in (\ref{eq:generating-word}))}
\begin{itemize}
    \item The letters $s_i$ with $a_i = \infty$ are generated when the table $\tau_\mathrm{init}$ is applied.
    \item The letters $s_i$ with $3\ell < a_i < \infty$ first come into a sentential form after applying the last $\tau_\mathrm{up}$ of
    \begin{equation}\label{eq:table1}
        \tau_\mathrm{init}
        \left(\tau_\mathrm{up}\right)^{A+4 - \lceil a_i / \ell \rceil},
    \end{equation}
    that is, for each $m$, letters $s_i$ with $(3+m)\ell < a_i \leq (4+m)\ell$ enter a sentential form after applying
    \[
        \tau_\mathrm{init}
        \left(\tau_\mathrm{up}\right)^{A - m}.
    \]
    In (\ref{eq:table1}) we have $A+4$ in the exponent as in this case we have $4\leq \lceil a_i /\ell\rceil\leq A+3$, and thus it would follow that $1\leq A+4-\lceil a_i/\ell\rceil \leq A$, i.e., the sequence of tables contains at least one $\tau_\mathrm{up}$.
    \item The letters $s_i$ with $a_i \leq 3\ell$ are generated at the end of the production by the table $\tau_\mathrm{final}$.
\end{itemize}
Our construction of the grammar $E'$ satisfies analogous properties.
During each part of this production, we ensure that words are produced unambiguously with respect to their rational control.

\subsection {Nonterminals and starting symbols}

We begin by introducing the starting symbols $S = \llbracket \eta ; \eta \rrbracket$ and $S' = \llbracket \eta ; \neg\eta \rrbracket$ of the grammars $E$ and $E'$, respectively.
The nonterminal $S$ is a placeholder for a word whose action stabilises the ray $\eta$.
Similarly, the nonterminal $S'$ is a placeholder for a word whose action does not stabilise $\eta$, that is, whose action takes $\eta$ to a different ray.
We now define the remaining nonterminals as follows.

In the following, it should be understood that $I$ refers to a set of pairs as constructed in \cref{lem:setI}.

\subsubsection{Nonterminals with restrictions on result of a word action}
\label{sec:nonterminals}

Each of the nonterminals which we introduce below can be a placeholder for words which have very particular actions on a particular vertex of the tree $\Td$.
The main idea of our construction of the grammar $E$ is to decompose the action of a word on $\eta$ into a sequence of actions of this form.

For each pair of rays $\zeta=uv^\omega,\zeta'=u'(v')^\omega$ where $(u,v),(u',v')\in I$ and each pair of paths $p,p'\in C^{2\ell}$, we have nonterminals $\llbracket \zeta, p ; \zeta',p'\rrbracket$ and $\llbracket \zeta,p; \neg \eta \rrbracket$.

In our proof, we construct our grammars so that for each $m \geq 0$, we have
\[
    w\in X^*\cap
    (
    \llbracket \zeta, p ; \zeta',p'\rrbracket
    \cdot
    (\tau_\mathrm{up})^m \tau_\mathrm{finish}
    )
\]
if and only if $u v^m p\acts \overline{w} = u'(v')^m p'$ and
\[
    \Decorate(uv^mp, w)
    =
    s_1^{(a_1)} s_2^{(a_2)}\cdots s_k^{(a_k)}
\]
where each $a_i \leq |uv^mp| = (m+3)\ell$.
Moreover, our grammar ensures that, for each $m \geq 0$, we have
\[
    w\in X^*\cap (
    \llbracket \zeta, p ; \neg \eta\rrbracket
    \cdot
    (\tau_\mathrm{up})^m \tau_\mathrm{finish})
\]
if and only if $u v^m p\acts \overline{w} \neq ab^{m+2}$ with $(a,b)\in I$ and $ab^\omega = \eta$, and that
\[
    \Decorate(uv^mp, w)
    =
    s_1^{(a_1)} s_2^{(a_2)}\cdots s_k^{(a_k)}
\]
where each $a_i \leq |uv^mp| = (m+3)\ell$.
That is, $\llbracket \zeta, p ; \neg \zeta\rrbracket$ corresponds to words whose action does not take words of the form $uv^mp$ to a prefix of $\eta$.

Let $ab^\omega=\eta$ where $(a,b)\in I$, then we see that the nonterminals $\llbracket\eta,b^2;\eta,b^2\rrbracket$ and $\llbracket \eta,b^2; \neg\eta\rrbracket$ can be used to generate subwords of words in $\WP(G,X,\Stab(\eta))$ and in $X^*\setminus \WP(G,X,\Stab(\eta))$ respectively, which are composed of letters $s_j$ with $a_j< \infty$ as in (\ref{def:decorated-words})

\subsubsection
[Additional nonterminals for E']
{Additional nonterminals for $E'$}
\label{sec:nonterminals-additional}

For technical reasons, for the grammar $E'$, we require some additional nonterminals which are placeholders for 
factors of output words containing letters $s_j$ with each $a_j< \infty$ as in (\ref{def:decorated-words}).

For each ray $\zeta=uv^\omega$ with $(u,v)\in I$ and each path $p\in C^{2\ell}$, we introduce a nonterminal $\llbracket \zeta,p; \mathrm{any}\rrbracket$.
We construct our grammar such that for each $m\geq 0$, we have
\[
    w \in X^* \cap ( \llbracket \zeta,p;\mathrm{any}\rrbracket\cdot (\tau_\mathrm{up})^m \tau_\mathrm{finish})
\]
if and only if
\[
    \Decorate(uv^mp, w)
    =
    s_1^{(a_1)} s_2^{(a_2)}\cdots s_k^{(a_k)}
\]
where each $a_i \leq |uv^mp| = (m+3)\ell$.

\subsubsection{Intuition of the starting symbols}\label{sec:intuition-of-starting}

In our proof, we construct the tables of our grammar in such a way that, for each $m\geq 0$, we have
\[
    w\in X^*\cap ( \llbracket \eta; \eta\rrbracket\cdot\tau_\mathrm{init}(\tau_\mathrm{up})^m \tau_\mathrm{finish})
\]
if and only if $w\in \mathcal \WP(G,X,\Stab(\eta))$ and
\[
    \Decorate(\eta, w)
    =
    s_1^{(a_1)} s_2^{(a_2)}\cdots s_k^{(a_k)}
\]
where each $a_i \in \{0,1,2,3,...,(m+3)\ell\}\cup \{\infty\}$.
For each $m\geq 0$,
\[
    w\in X^*\cap ( \llbracket \eta; \neg\eta\rrbracket\cdot\tau_\mathrm{init}(\tau_\mathrm{up})^m \tau_\mathrm{finish})
\]
if and only if $w\in X^*\setminus \WP(G,X,\Stab(\eta))$ and
\[
    \Decorate(\eta, w)
    =
    s_1^{(a_1)} s_2^{(a_2)}\cdots s_k^{(a_k)}
\]
where each $a_i \in \{0,1,2,3,...,(m+3)\ell\}\cup \{\infty\}$.
From these properties, it is clear that the grammars $E$ and $E'$ satisfy the limiting property (\ref{def:limiting-et0l/limit}) as in \cref{def:limiting-et0l}.
Moreover, from our choice of tables and rational control, we see that these grammars satisfy properties (\ref{def:limiting-et0l/1}) and (\ref{def:limiting-et0l/2}) in \cref{def:limiting-et0l}.
In the remainder of our construction, we verify that it indeed satisfies the remaining properties of \cref{def:limiting-et0l}.

\subsubsection{Computability of nonterminals}

Given the set $I\subset C^*\times C^*$ and a description of the ray $\eta = ab^\omega$, we can list all of the finite nonterminals.

\subsection
[Initialisation map]
{Initialisation map: $\tau_\mathrm{init}$}
\label{sec:map-init}

In order to explain the constructions of the initialisation tables of $E$ and $E'$, we begin with the following observation.
Let $w = s_1 s_2 \cdots s_k \in X^*$ and consider its decorated version with respect to $\eta$
\begin{equation}\label{eq:decorated-accepted-word}
    \Decorate(\eta,w)
    =
    s_1^{(a_1)} s_2^{(a_2)} \cdots s_k^{(a_k)}
\end{equation}
with $a_i \in \mathbb{N}\cup \{\infty\}$.
Then factor the word $w$ uniquely as
\begin{equation}\label{eq:initial-table}
    w = z_0 x_1 z_1 x_2 z_2 \cdots x_q z_q
\end{equation}
where each $z_j \in X^*$, each $x_j\in D$ where the words $z_j$ correspond to (potentially empty) sequences of letters $s_i$ for which $a_i< \infty$, and each $x_j$ correspond to letters $s_i$ for which $a_i = \infty$.
We define our initialisation table such that it fills in each $x_j$ with a directed automorphism, and puts an appropriate placeholder nonterminal in the spot of each word $z_j$.

\subsubsection
[Initialising the grammar E]
{Initialising the grammar $E$}\label{sec:initialise-E}

Let $\eta = ab^\omega$ where $(a,b)\in I$, then from the factorisation in (\ref{eq:initial-table}) we define the language $\llbracket \eta, \eta\rrbracket\cdot\tau_\mathrm{init}$ to contain all words of the form
\begin{multline*}
        \llbracket \eta, b^2 ; \alpha_1, (v_1)^2 \rrbracket 
        x_1
        \llbracket \alpha_1', (v_1')^2 ; \alpha_2, (v_2)^2 \rrbracket
        x_2
        \llbracket \alpha_2', (v_2')^2 ; \alpha_3, (v_3)^2 \rrbracket
        \\
        x_3
        \llbracket \alpha_3', (v_3')^2 ; \alpha_4, (v_4)^2 \rrbracket
        \cdots
        x_k
        \llbracket \alpha_k', (v_k')^2 ; \eta, b^2 \rrbracket
    \in
    (\llbracket \eta, \eta\rrbracket\cdot \tau_\mathrm{init})
\end{multline*}
where
\begin{enumerate}
\item\label{req:word-factor/item1}
$\mathrm{spine}(x_i) = \alpha_i = u_i (v_i)^\omega$ where $(u_i,v_i)\in I$
for each $i \in \{ 1,2,...,k \}$; and
\item\label{req:word-factor/item2}
$\mathrm{spine}(x_i)\cdot x_i = \alpha_i' = u_i' (v_i')^\omega$ 
where $(u_i',v_i')\in I$
for each $i \in \{ 1,2,...,k \}$.
\item\label{req:word-factor/item3}
$b = v_1 = v_k'$ and $v'_i = v_{i+1}$ for each $i\in \{1,2,...,k-1\}$.
\end{enumerate}
Each nonterminal of the form $\llbracket -,-;-,-\rrbracket$ as above corresponds to some word $z_i$ as in (\ref{eq:initial-table}).

Item~\ref{req:word-factor/item3} ensures that the action of the letters $x_i$ are being tracked correctly, in particular, the words corresponding to the placeholders $\llbracket -,-;-,-\rrbracket$ cannot modify the ray beyond a particular finite depth: Item~\ref{req:word-factor/item3} ensures that the tail of these rays match.

The set of words $\llbracket \eta,\eta \rrbracket \cdot \tau_\mathrm{init} \subseteq (X\cup V)^*$, as defined above, is a regular language since, for each word in the language, the possible values of each letter depends, at most, on the previous two nonterminals.
Thus, one could construct a finite-state automaton to recognise all such words where the states of the automaton correspond to the possible values of the two previous nonterminals of the form $\llbracket -,-;-,-\rrbracket$.

\subsubsection
[Initialising the grammar E']
{Initialising the grammar $E'$}\label{sec:initialise-E'}

Let $\eta = ab^\omega$ where $(a,b)\in I$, then from the factorisation in (\ref{eq:initial-table}) we define the language $\llbracket \eta,\neg\eta\rrbracket\cdot\tau_\mathrm{init}$ to contain all words of the form
\begin{multline*}
        \llbracket \eta, b^2 ; \alpha_1, (v_1)^2 \rrbracket 
        x_1
        \llbracket \alpha_1', (v_1')^2 ; \alpha_2, (v_2)^2 \rrbracket
        x_2
        \llbracket \alpha_2', (v_2')^2 ; \alpha_3, (v_3)^2 \rrbracket
        \\
        x_3
        \llbracket \alpha_3', (v_3')^2 ; \alpha_4, (v_4)^2 \rrbracket
        \cdots
        x_k
        \llbracket \alpha_k', (v_k')^2 ; \underline{\varphi} \rrbracket
    \in
    (\llbracket \eta, \neg\eta\rrbracket\cdot\tau_\mathrm{init})
\end{multline*}
where
\begin{enumerate}
\item\label{req:word-factor2/item1}
$\mathrm{spine}(x_i) = \alpha_i = u_i(v_i)^\omega$ where $(u_i,v_i)\in I$
for each $i \in \{ 1,2,...,k \}$;
\item\label{req:word-factor2/item2}
$\mathrm{spine}(x_i)\cdot x_i = \alpha_i' = u_i' (v_i')^\omega$ where $(u_i',v_i')\in I$
for each $i \in \{ 1,2,...,k \}$;

\item\label{req:word-factor2/item3}
$b = v_2$ and $v'_i = v_{i+1}$ for each $i\in \{1,2,...,k-1\}$; and

\item\label{req:word-factor2/item4}
if $v'_k = b$, then $\underline{\varphi} = \neg \eta$, otherwise, $v'_k\neq b$ and $\underline{\varphi} = \mathrm{any}$.
\end{enumerate}
Each nonterminal in the above corresponds to some factor $z_i$ as in (\ref{eq:initial-table}).
Items~\ref{req:word-factor2/item1} and~\ref{req:word-factor2/item2} ensure that it is possible for each $x_i$ to have infinite directional depth; item~\ref{req:word-factor2/item3} and~\ref{req:word-factor2/item4} ensure that the word has an action which does not stabilise the ray $\eta$, and that this action is being correctly tracked.

The set $\llbracket \eta, \neg\eta \rrbracket\cdot \tau_\mathrm{init} \subseteq (X\cup V)^*$ is regular for precisely the same reasons as $\llbracket \eta,\eta\rrbracket\cdot \tau_\mathrm{init}$ is regular in \cref{sec:initialise-E}.
That is, each possible choice of nonterminal depends, at most, on the previous two nonterminals of the form $\llbracket -,-;-,-\rrbracket$.

\subsubsection{Validity and computability}

From our definition of $\tau_\mathrm{init}$, we see that both $\llbracket \eta,\eta\rrbracket\cdot\tau_\mathrm{init}$ and $\llbracket \eta,\neg\eta\rrbracket\cdot\tau_\mathrm{init}$ are regular languages (cf.~\cref{sec:initialise-E,sec:initialise-E'}), and thus $\tau_\mathrm{init}$ is a table as in \cref{def:tables}.
For all other nonterminals, the table $\tau_\mathrm{init}$ does not need to be specified, as this table will only be applied to the starting symbols.

Given a set $I\subset C^*\times C^*$ and a set of nonterminals, there is an algorithm which can generate the finite state automata for the table $\tau_\mathrm{init}$.

\subsection
[Processing map]
{Processing map: $\tau_\mathrm{up}$}

We now describe the map $\tau_\mathrm{up}$ that performs replacements on the nonterminals of the form $\llbracket \zeta,p; \zeta',p'\rrbracket$, $\llbracket \zeta,p ; \neg\eta\rrbracket$ and $\llbracket \zeta,p ; \mathrm{any} \rrbracket$.
Let $\zeta = uv^\omega$ where $(u,v)\in I$.
To simplify the explanation of this map, we begin by giving a sketch of the intended meaning of each such nonterminal.

Recall from \cref{sec:nonterminals,sec:nonterminals-additional} that if the word $w\in X^*$ belongs to the set
\[
    X^*\cap (\llbracket \zeta,p; \zeta',p'\rrbracket\cdot(\tau_\mathrm{up})^m \tau_\mathrm{finish}),\ 
    X^*\cap (\llbracket \zeta,p ; \neg\eta\rrbracket\cdot(\tau_\mathrm{up})^m \tau_\mathrm{finish})
    \ \ \text{or}\ \ 
    X^*\cap (\llbracket \zeta,p ; \mathrm{any} \rrbracket\cdot(\tau_\mathrm{up})^m \tau_\mathrm{finish}),
\]
for some $m\geq 0$, then we can decorate the word $w$ as
\[
    \Decorate(uv^mp,w) = s_1^{(a_1)} s_2^{(a_2)} \cdots s_k^{(a_k)}
\]
where each $a_i \leq |uv^mp| = (m+3)\ell$.
Analogously to (\ref{eq:initial-table}) in \cref{sec:map-init}, $w$ can be uniquely factored as
\begin{equation}\label{eq:processing-factorisation}
    w = z_0 x_1 z_1 x_2 z_2 \cdots x_s z_s
\end{equation}
where each $z_j\in X^*$ contains the letters $s_i$ of the word $w$ for which $a_i \leq (m+2)\ell$, and each $x_j$ is a letter $s_i$ of the word $w$ for which $a_i$ is bounded as $(m+2)\ell < a_i \leq (m+3)\ell$.
From \cref{lem:setI}, we see that each $x_j$ must be a directed automorphism in $D = X\cap \DirTd$ as the corresponding letter $s_i$ has $a_i > \ell$.

We define the map $\tau_\mathrm{up}$ so that it interprets the nonterminal $\llbracket \zeta,p; \zeta',p'\rrbracket$, $\llbracket \zeta,p ; \neg\eta\rrbracket$ and $\llbracket \zeta,p ; \mathrm{any} \rrbracket$ as words of the form (\ref{eq:processing-factorisation}), by producing words where each $z_i$ is represented by some placeholder of the form $\llbracket \zeta,p; \zeta',p'\rrbracket$, $\llbracket \zeta,p ; \neg\eta\rrbracket$ or $\llbracket \zeta,p ; \mathrm{any} \rrbracket$, and each $x_i$ is replaced by an appropriate member of $D = X\cap \DirTd$.
We now construct the table $\tau_\mathrm{up}$ as follows.

\subsubsection
[Case 1]
{Case 1: $\llbracket \zeta,p;\zeta',p'\rrbracket$}
\label{sec:t_up-case1}

Let $\zeta = uv^\omega,\zeta' = u'(v')^\omega$ where $(u,v),(u',v') \in I$, and let $p,p'\in C^{2\ell}$ be paths in the tree.
We then define  $\llbracket \zeta,p;\zeta',p'\rrbracket\cdot\tau_\mathrm{up}$ such that it contains all nonempty words of the form
\[
        \llbracket \zeta, v p_1 ; \alpha_1, q_1 \rrbracket 
        x_1
        \llbracket \alpha_1', q_1' ; \alpha_2, q_2 \rrbracket
        x_2
        \llbracket \alpha_2', q_2' ; \alpha_3, q_3 \rrbracket
        \cdots
        x_k
        \llbracket \alpha_k', q_k' ; \zeta', v' p_1' \rrbracket
    \in
    (\llbracket \zeta,p;\zeta',p'\rrbracket\cdot\tau_\mathrm{up})
\]
where 
\begin{enumerate}
\item\label{item:main-therem/3.1/1}
$v p_1\in C^{2\ell}$ is the length-$2\ell$ prefix of $vp\in C^{3\ell}$;
\item\label{item:main-therem/3.1/2}
$v' p_1' \in C^{2\ell}$ is the length-$2\ell$ prefix of $v' p' \in C^{3\ell}$;
\item\label{item:main-therem/3.1/3}
each $\alpha_i = \mathrm{spine}(x_i) = u_i(v_i)^\omega$ where $(u_i, v_i) \in I$ and $q_i \in C^{2\ell}$;
\item \label{item:main-therem/3.1/4}
each $\alpha_i' = \mathrm{spine}(x_i)\acts x_i = u_i'(v_i')^\omega$ where $(u_i', v_i') \in I$ and $q_i' \in C^{2\ell}$; and
\item\label{item:main-therem/3.1/5}
there is a sequence of words $y_0,y_1,...,y_k \in C^\ell$ such that
\begin{itemize}
    \item $y_0$ is the length-$\ell$ suffix of $p\in C^{2\ell}$,
    \item $y_k$ is the length-$\ell$ suffix of $p'\in C^{2\ell}$,
\end{itemize}
and
\[
    (u_i v_i q_i y_{i-1})\acts x_i = u_i' v_i' q_i' y_i
\]
 for each $i \in \{1,2,...,k\}$ such that
\begin{equation}\label{eq:directional-depth-req}
    4\ell=|u_i v_i q_i|
    <
    \CompDepth (u_i v_i q_iy_{i-1} , x_i) \leq |u_i v_i q_iy_{i-1}|=5\ell
\end{equation}
for each $i\in \{1,2,...,k\}$.
\end{enumerate}

Items (\ref{item:main-therem/3.1/3},\ref{item:main-therem/3.1/4},\ref{item:main-therem/3.1/5}) above imply that for each $m\geq 0$, we have
\[
    (u_i (v_i)^m q_i y_{i-1})\acts x_i
    =
    u_i' (v_i')^m q_i' y_i
\]
 for each $i\in \{1,2,...,k\}$ such that
\[
    (m+3)\ell=|u_i (v_i)^m q_i|
    <
    \CompDepth (u_i (v_i)^m q_i y_{i-1} , x_i) \leq (m+4)\ell
\]
for each $i\in \{1,2,...,k\}$.

The set of words $\llbracket \zeta,p;\zeta',p'\rrbracket\cdot \tau_\mathrm{up}$ forms a regular language as the possible values of each letter depends, at most, on the previous two letters and the previous word of the form $y_i$ as in Item~\ref{item:main-therem/3.1/5} as above.
Thus, we may construct a finite-state automaton to recognise all such words.

The words of this regular language, as described above, exactly correspond to words of the form (\ref{eq:processing-factorisation}).
In particular, the nonterminals correspond to the words $z_j$ in and the letters $x_j$ correspond to the letters $x_j$ in (\ref{eq:processing-factorisation}).
Our restrictions ensure that the letters $x_j$ have the appropriate directional depth in (\ref{eq:directional-depth-req}), and that the action of the associated words matches the action intended by the placeholder $\llbracket \zeta,p;\zeta',p'\rrbracket$.

\subsubsection
[Case 2]
{Case 2: $\llbracket \zeta,p;\neg\eta\rrbracket$}%
\label{sec:t_up-case2}

Let $\zeta = u v^\omega$ where $(u,v) \in I$, and let $p\in C^{2\ell}$ be a path in the tree.
We then define $\llbracket \zeta,p;\neg\eta\rrbracket\cdot\tau_\mathrm{up}$ such that it contains all words of the form
\[
        \llbracket \zeta, v p_1 ; \alpha_1, q_1 \rrbracket 
        x_1
        \llbracket \alpha_1', q_1' ; \alpha_2, q_2 \rrbracket
        x_2
        \llbracket \alpha_2', q_2' ; \alpha_3, q_3 \rrbracket
        x_3
        \llbracket \alpha_3', q_3' ; \alpha_4, q_4 \rrbracket
        \cdots
        x_k
        \llbracket \alpha_k', q_k' ; \underline{\varphi} \rrbracket
    \in
    (\llbracket \zeta,p;\neg \eta\rrbracket\cdot\tau_\mathrm{up})
\]
where
\begin{enumerate}
\item \label{item:main-therem/4.2/1}
$v p_1\in C^{2\ell}$ is the length-$2\ell$ prefix of $vp\in C^{3\ell}$;
\item\label{item:main-therem/4.2/2}
each $\alpha_i = \mathrm{spine}(x_i) = u_i (v_i)^\omega$ where $(u_i, v_i) \in I$, and $q_i \in C^{2\ell}$;
\item \label{item:main-therem/4.2/3}
each $\alpha_i' = \mathrm{spine}(x_i) \acts x_i = u_i' (v_i')^\omega$ where $(u_i', v_i') \in I$, and $q_i' \in C^{2\ell}$;
\item\label{item:main-therem/4.2/4}
there is a sequence of words $y_0,y_1,...,y_k \in C^\ell$ defined such that
\begin{itemize}
    \item $y_0$ is the length-$\ell$ suffix of $p\in C^{2\ell}$,
\end{itemize}
and
\[
    (u_i v_i q_i y_{i-1})\acts x_i
    =
    u_i' v_i' q_i' y_i
\]
for each $i\in \{1,2,...,k\}$ such that
\[
    4\ell=|u_i v_i q_i|
    <
    \CompDepth (u_i v_i q_i y_{i-1} , x_i) \leq |u_i v_i q_i y_{i-1}|=5\ell
\]
for each $i\in \{1,2,...,k\}$; and
\item\label{item:main-therem/4.2/5}
the value of $\underline{\varphi}$ depends on the value of $y_k$, as in item~\ref{item:main-therem/4.2/4}, in particular,
\[
    \underline{\varphi}
    =
    \begin{cases}
        \neg\eta &\text{if }y_k = b\text{ where }\eta=ab^\omega \text{ with } (a,b)\in I\\
        \mathrm{any}&\text{otherwise}.
    \end{cases}
\]
\end{enumerate}
Items~\ref{item:main-therem/4.2/4} and~\ref{item:main-therem/4.2/5} above ensure that the action of the word does not stabilise the ray $\eta$, and that each letter $x_i$ has a directional depth within $\ell$ of the maximum.
In particular, items~(\ref{item:main-therem/4.2/2},\ref{item:main-therem/4.2/3},\ref{item:main-therem/4.2/4}) above imply that, for each $m\geq 0$,
\[
    (u_i (v_i)^m q_i y_{i-1})\acts x_i
    =
    u_i' (v_i')^m q_i' y_i
\]
for each $i\in \{1,2,...,k\}$ such that
\[
    (m+3)\ell=|u_i (v_i)^m q_i|
    <
    \CompDepth (u_i (v_i)^m q_i y_{i-1} , x_i) \leq (m+4)\ell
\]
for each $i\in \{1,2,...,k\}$.

Using the same argument as in \cref{sec:t_up-case1}, we see that the set of words, described above, is a regular language.
In particular, the possible values of each letter can depend on, at most, the previous letters, and on the previous choice of word $y_i$ as in Item~\ref{item:main-therem/4.2/4} as above.
Thus, we can construct a finite-state automaton to recognise all such words.

\subsubsection
[Case 3]
{Case 3: $\llbracket \zeta,p;\mathrm{any}\rrbracket$}%
\label{sec:t_up-case3}

Let $\zeta = uv^\omega$ where $(u,v)\in I$, and let $p\in C^{2\ell}$ be a path in the tree.
We then define $\llbracket \zeta,p;\mathrm{any}\rrbracket\cdot\tau_\mathrm{up}$ such that it contains all words of the form
\[
        \llbracket \zeta, v p_1 ; \alpha_1, q_1 \rrbracket 
        x_1
        \llbracket \alpha_1', q_1' ; \alpha_2, q_2 \rrbracket
        x_2
        \llbracket \alpha_2', q_2' ; \alpha_3, q_3 \rrbracket
        \cdots
        x_k
        \llbracket \alpha_k', q_k' ; \mathrm{any} \rrbracket
    \in
    (\llbracket \zeta,p;\mathrm{any}\rrbracket\cdot\tau_\mathrm{up})
\]
where
\begin{enumerate}
\item \label{item:main-therem/4.3/1}
$v p_1\in C^{2\ell}$ is the length-$2\ell$ prefix of $vp\in C^{3\ell}$;
\item\label{item:main-therem/4.3/2}
each with $\alpha_i = \mathrm{spine}(x_i) = u_i (v_i)^\omega$ where $(u_i, v_i) \in I$, and $q_i \in C^{2\ell}$; 
\item\label{item:main-therem/4.3/3} 
each with $\alpha_i' = \mathrm{spine}(x_i)\acts x_i = u_i' (v_i')^\omega$ where $(u_i', v_i') \in I$, and $q_i' \in C^{2\ell}$; and
\item\label{item:main-therem/4.3/4}
there is a sequence of paths $y_0,y_1,...,y_k \in C^\ell$ defined such that
\begin{itemize}
    \item $y_0$ is the length-$\ell$ suffix of $p\in C^{2\ell}$,
\end{itemize}
and
\[
    (u_i v_i q_i y_{i-1})\acts x_i
    =
    u_i' v_i' q_i' y_i
\]
for each $i\in \{1,2,...,k\}$ such that
\[
    4\ell=|u_i v_i q_i|
    <
    \CompDepth (u_i v_i q_i y_{i-1} , x_i) \leq |u_i v_i q_iy_{i-1}|=5\ell
\]
for each $i\in \{1,2,...,k\}$.
\end{enumerate}
The items~(\ref{item:main-therem/4.3/2},\ref{item:main-therem/4.3/3},\ref{item:main-therem/4.3/4}) imply that for each $m\geq 0$, we have
\[
    (u_i (v_i)^m q_i y_{i-1})\acts x_i
    =
    u_i' (v_i')^m q_i' y_i
\]
for each $i\in \{1,2,...,k\}$ such that
\[
    (m+3)\ell=|u_i (v_i)^m q_i|
    <
    \CompDepth (u_i (v_i)^m q_i y_{i-1} , x_i) \leq (m+4)\ell
\]
for each $i\in \{1,2,...,k\}$.

Using the same argument as in \cref{sec:t_up-case1,sec:t_up-case2}, we see that the set of words, described above, is a regular language.
In particular, the possible values of each letter can depend on, at most, the previous letters, and on the previous choice of word $y_i$ as in Item~\ref{item:main-therem/4.2/4} as above.
Thus, we can construct a finite-state automaton to recognise all such words.

\subsubsection{Computability.}

\smallskip\noindent
Given the set $I\subset C^*\times C^*$ and the set of nonterminals, there is an algorithm that generates the finite state automata described in sections~3.1-3 of this proof.

\subsection
[Final map]
{Final map: $\tau_\mathrm{finish}$}\label{sec:finish-table}

We now complete the description of our tables by constructing the table $\tau_\mathrm{finish}$.
After applying a sequence of tables of the form $\tau_\mathrm{init}(\tau_\mathrm{up})^*$, any word will contain at least one nonterminal which is a placeholder for words with a particular action.
This table attempts to finish the production of words which correspond to factors of $w$ containing letters $s_j$ for which $a_j \leq 3\ell$ as in (\ref{eq:decorated-accepted-word}). 
We note then that if it is not possible to fill in a nonterminal, then the table leaves it unchanged, and thus does not produce a word as output.

Let $(a,b)\in I$ be such that $\eta = a b^\omega$.
For each $\zeta = u v^\omega, \zeta' = u' (v')^\omega$ where $(u,v),(u',v')\in I$ and all paths $p,p'\in C^{2\ell}$, we define the table $\tau_\mathrm{finish}$ as
\begin{align*}
    (\llbracket \zeta,p ; \zeta',p'\rrbracket\cdot\tau_\mathrm{finish} )
    &=
    \left\{
        w \in X^*
    \ \middle|\
    \begin{aligned}
        (up)\acts w=u'p'
        \text{ and }\\
        \Decorate(up, w) = s_1^{(a_1)} s_2^{(a_2)}\cdots s_k^{(a_k)}\\
        \text{where each }a_i\leq |up|
    \end{aligned}
    \right\}
    \cup \{\llbracket \zeta,p ; \zeta',p'\rrbracket\},
    \\
    (\llbracket \zeta,p ; \neg\eta\rrbracket \cdot \tau_\mathrm{finish})
    &=
    \left\{
        w \in X^*
    \ \middle|\
    \begin{aligned}
        (up)\acts w\neq a (b)^2
        \text{ and }\\
        \Decorate(up, w) = s_1^{(a_1)} s_2^{(a_2)}\cdots s_k^{(a_k)}\\
        \text{where each }a_i\leq |up|
    \end{aligned}
    \right\}
    \cup \{\llbracket \zeta,p ; \neg\eta\rrbracket\},
    \\
    (\llbracket \zeta,p ; \mathrm{any}\rrbracket \cdot \tau_\mathrm{finish})
    &=
    \left\{
        w \in X^*
    \ \middle|\
    \begin{aligned}
        \Decorate(up, w) = s_1^{(a_1)} s_2^{(a_2)}\cdots s_k^{(a_k)}\\
        \text{where each }a_i\leq |up|
    \end{aligned}
    \right\}
    \cup \{\llbracket \zeta,p ; \mathrm{any}\rrbracket\}.
\end{align*}
Each of the above is a regular language, in particular, can be recognised by a finite-state automaton with states of the form $q\in C^{3\ell}$, and thus, these automata have at most $d^{3\ell}$ states where $3\ell = |up|$.

In the above definitions, we allow each map to potentially leave a nonterminal unchanged.
We add this possibility so that we satisfy property (\ref{def:limiting-et0l/gamma}) of \cref{def:limiting-et0l}.

\subsection{Proof of main theorem}

We see from our construction that 
\begin{equation*}
    w \in X^* \cap (S\cdot\tau_\mathrm{init} (\tau_\mathrm{up})^m \tau_\mathrm{finish})
\end{equation*}
for some $m \in \N$ if and only if both $w \in \WP(G,X,\Stab(\eta))$ and
\begin{equation}\label{eq:final-decomp}
    \Decorate(\eta,w)
    =
    s_1^{(a_1)} s_2^{(a_2)} \cdots s_k^{(a_k)}
\end{equation}
where each $a_i \in \{0,1,2,...,(m+3)\ell\}\cup \{\infty\}$.
For each $w\in \WP(G,X,\Stab(\eta))$, there exists some $m$, as in (\ref{eq:final-decomp}).
With each application of the tables $\tau_\mathrm{init}$, $\tau_\mathrm{up}$, $\tau_\mathrm{finish}$, we uniquely factor the word $w$ into finitely many subwords.
That is, each word generated by our grammar is generated unambiguously with respect to the rational control.

From \cref{sec:intuition-of-starting} above we see that our grammars satisfy properties (\ref{def:limiting-et0l/1}), (\ref{def:limiting-et0l/2}) and (\ref{def:limiting-et0l/limit}) from \cref{def:limiting-et0l}.
Moreover, from \cref{sec:finish-table} we see that the grammars also satisfy property (\ref{def:limiting-et0l/gamma}) from \cref{def:limiting-et0l}.
Further, from the description of the table $\tau_\mathrm{up}$, we see that they also satisfy property (\ref{def:limiting-et0l/beta}) of \cref{def:limiting-et0l}.

The nonterminals and tables of the grammars are computable.
Thus, we conclude that the grammars $E$ and $E'$ for the languages $\WP(G,X,\Stab(\eta))$ and $X^*\setminus \WP(G,X,\Stab(\eta))$, respectively, are unambiguous limiting ET0L, and are effectively computable.

\section{Are stabilisers of infinite rays context-free?}\label{sec:not-CF} 

Since context-free languages are ET0L, it is natural to ask if \cref{thm:main} can be sharpened to context-free rather than ET0L. Given a bounded automata group $G\leq \AutTd$ with a finite generating set $X$ and $\eta$ any infinite ray in $\Td$, we provide two obstructions to the language $\WP(G,X,\Stab(\eta))$ being context-free. Exploiting results and techniques from \cite{BDN2017}, for a large class of groups, we show that almost all of these languages are not context-free. For a few key examples, we show that all such languages are not context-free. Hence, \cref{thm:main} cannot be improved from ET0L to context-free languages. 

Recall that a group $G\leq \AutTd$ has an induced action on the boundary of the rooted regular tree. We can thus consider the family of Schreier graphs associated with this action, as in \cref{sec:introduction}.
Recall also that $\WP(G,X,\Stab(\eta))$ is the language of words that label closed paths from $\eta$ to $\eta$ in the (rooted) Schreier graph $\Gamma_\eta$ of $\Stab(\eta)$.

\begin{definition}
Let $\Gamma_{v_{0}}$ be a labelled graph rooted in $v_{0}$ and let $v$ be a vertex of $\Gamma_{v_{0}}$.
The \emph{end-cone} $\Gamma_{v_{0}} (v)$ is the connected component of $\Gamma_{v_{0}} \setminus B_{\Gamma_{v_{0}}}(v_{0},|v|)$ which contains $v$, where $B_{\Gamma_{v_{0}}}(v_{0},k)$ denotes the (open) ball of radius $k$ centred at $v_{0}$ and $|v|$ is the distance from $v_0$ to $v$.
We denote by $\Delta_{v_{0}} (v)$ the set of vertices of $\Gamma_{v_{0}} (v)$ that are at a minimal distance from $v_{0}$, and we call them \emph{frontier points} of the end-cone.

We say that two end-cones $\Gamma_{v_{0}} (v_1)$ and $\Gamma_{v_{0}} (v_2)$ \emph{have the same type} if there exists a graph isomorphism $\varphi\colon \Gamma_{v_{0}} (v_1)\to\Gamma_{v_{0}} (v_2)$ between them that respects the labelling and for which $\varphi(\Delta_{v_{0}}(v_1))=\Delta_{v_{0}} (v_2)$.
\end{definition}

\begin{definition}
 A rooted graph is context-free if it has finitely many types of end-cones.
\end{definition}

On one hand, if $\WP(G,X,\Stab(\eta))$ is context-free, then so is the corresponding Schreier graph. On the other hand, context-free graphs are quasi-isometric to trees (see \cite[Propositions 7 and 9]{Rodaro2023}). Hence, we have our first criterion, as follows.

\begin{theorem}[see \cite{Rodaro2023}]\label{thm:Sch-not-CF}
Let $G$ be a finitely generated bounded automata group and let $\eta$ be an infinite ray in $\partial \Td$. If $\WP(G,X,\Stab(\eta))$ is context-free, then the Schreier graph $\Gamma_\eta$ is quasi-isometric to a tree.
\end{theorem}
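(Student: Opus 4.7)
The plan is to split the theorem into two implications, each already available in \cite{Rodaro2023}, and then chain them. First I would argue that context-freeness of $\WP(G,X,\Stab(\eta))$ forces the rooted labelled Schreier graph $\Gamma_\eta$ to be a context-free graph, i.e.\ to have only finitely many end-cone types. Second, I would invoke the fact that a connected bounded-degree context-free graph is quasi-isometric to a tree. The bounded-degree hypothesis is automatic here because the generating set $X$ is finite, so every vertex of $\Gamma_\eta$ has degree at most $|X|$.

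For the first step, the key point is that $\WP(G,X,\Stab(\eta))$ is, by construction, precisely the language of labels of closed walks based at $\eta$ in $\Gamma_\eta$. For each end-cone $\Gamma_\eta(v)$ and each frontier point $u \in \Delta_\eta(v)$, the set of words that label walks starting at $\eta$, reaching $u$, then wandering in $\Gamma_\eta(v)$ and eventually returning to $\eta$ through $u$, can be extracted from $\WP(G,X,\Stab(\eta))$ by a rational transduction. If $\Gamma_\eta$ had infinitely many end-cone types, then these transducts would produce infinitely many pairwise distinct context-free languages, and a pumping-style / finite-state argument as in Proposition~7 of \cite{Rodaro2023} would give a contradiction. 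This is a rooted, labelled analogue of the classical Muller--Schupp theorem.

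For the second step, I would appeal directly to Proposition~9 of \cite{Rodaro2023}, which states that a connected context-free graph of bounded degree is quasi-isometric to a tree. Combining it with the first step yields the theorem at once.

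The main obstacle, should one wish to reproduce the argument in detail rather than citing Rodaro2023, lies in the first step: one must set up the correspondence between end-cone types (which involve a labelled graph isomorphism respecting frontier points) and equivalence classes of \emph{suffix languages} of $\WP(G,X,\Stab(\eta))$, and then turn the pumping lemma for context-free languages into a finiteness statement for these equivalence classes. The labelling and the interaction between the base point $\eta$ and the frontier points of the cones make this bookkeeping delicate, although all the ingredients are standard once the right formal framework is in place.
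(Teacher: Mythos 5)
Your proposal follows exactly the route the paper takes: the authors likewise observe that context-freeness of $\WP(G,X,\Stab(\eta))$ implies the Schreier graph $\Gamma_\eta$ is a context-free graph (finitely many end-cone types), and then invoke Propositions~7 and~9 of \cite{Rodaro2023} to conclude it is quasi-isometric to a tree. The paper gives no further detail beyond these two citations, so your sketch of the first step is, if anything, more explicit than what appears in the text.
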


We will now look at the number of ends, which is known to be a quasi-isometric invariant. It is proven in \cite[Corollary 5]{BDN2017}  that  Schreier graphs $(G,X,\Stab(\eta))$ of a bounded automata self-similar group $G$ have either almost surely one end or almost surely two ends. Almost surely here means for almost all $\eta \in \partial \Td$ with respect to 
 the uniform measure on $\partial\Td$. 
Let us first discuss the case of one end. It is straightforward that a one-ended tree is quasi-isometric to either the half line or the half line with infinitely many finite paths of unbounded lengths attached. For the first option, we will prove that a self-similar bounded automata group which is level-transitive cannot have more than two Schreier graphs quasi-isometric to a half-line. For the second option, it is not hard to see that the graph does not have finitely many end-cone types.

 In what follows, we will use results and methods from  \cite{Bondarenko2007thesis, BDN2017}. In particular, we require  our automata groups to be self-similar.


It will be helpful to us to describe the Schreier graph $\Gamma_{\xi}$, with $\xi = a_1a_2\cdots$, $a_i\in C$, in terms of the finite Schreier graphs  $(G,X,\Stab(a_{1} a_{2}\cdots a_{n}))$ that correspond to the stabilisers of the vertices of the tree $\Td$ that lie on the infinite ray $\xi$. If the action of $G$ on $\Td$ is transitive on every level $C^n$, then the Schreier graphs associated to $(G,X,\Stab(a_{1} a_{2}\cdots a_{n}))$ and $(G,X,\Stab(\widetilde{a}_{1} \widetilde{a}_{2}\cdots \widetilde{a}_{n}))$ are isomorphic as unrooted graphs; we will therefore denote such a graph simply by $\Gamma^n$. The vertices of $\Gamma^n$ are exactly the vertices of the $n$-th level, $C^n$, and  two vertices $a_{1} a_{2}\cdots a_{n}$ and $\widetilde{a}_{1} \widetilde{a}_{2}\cdots \widetilde{a}_{n}$ are joined by an edge whenever there exists an element of $X$ sending one to the other. By \cref{eq:stab} in the Introduction, the sequence of rooted graphs $\{(\Gamma^{n}, a_{1} a_{2}\cdots a_{n})\}_n$ converges to the rooted graph $(\Gamma_{\xi},\xi)$ in local topology. This means that for every radius $r$, the ball $B_{\Gamma_{\xi}}(\xi, r)$ in $\Gamma_{\xi}$ is isomorphic to the ball $B_{\Gamma^n}(a_{1} a_{2}\cdots a_{n}, r)$ in $\Gamma^{n}$, provided $n$ is sufficiently large.

\begin{theorem} \label{thm: Sch-not-half-line}
Let $G$ be a finitely generated self-similar bounded automata group acting transitively on any $C^n$, then the set 
\[
\{\mu\mid \Gamma_\mu \textrm{ is quasi-isometric to a half-line}\}
\] 
consists of at most two orbits.    
\end{theorem}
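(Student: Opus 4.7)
The plan is to exploit the local convergence $(\Gamma^n, a_1\cdots a_n)\to (\Gamma_\mu,\mu)$ discussed in the excerpt, together with the recursive substitutional structure that the self-similar bounded-automaton assumption puts on the finite Schreier graphs $\Gamma^n$, in order to pin down which rays $\mu$ can give an infinite Schreier graph quasi-isometric to a half-line.

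First, I would unpack what ``quasi-isometric to a half-line'' buys us: the graph $\Gamma_\mu$ is one-ended, has linear growth, and outside any sufficiently large ball it is connected and of uniformly bounded width. Via local convergence, these constraints translate to the statement that, for every $r$, the ball of radius $r$ around $a_1\cdots a_n$ in $\Gamma^n$ contains at most $Cr$ vertices for a uniform constant $C$, provided $n$ is large enough.

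Second, I would recall the standard inflation/substitution description available under level-transitivity plus self-similarity: each $\Gamma^n$ is obtained by gluing $d$ disjoint copies of $\Gamma^{n-1}$ (indexed by letters of $C$) along a uniformly bounded collection of ``connecting'' edges, where boundedness of the generating set (i.e.\ $X\subset \BTd$) forces the number of such edges to stay bounded independently of $n$. A ray $\mu = a_1 a_2 \cdots$ whose Schreier graph is QI to a half-line must then have each prefix $a_1\cdots a_n$ at uniformly bounded distance from one of these connecting edges at every level; otherwise, after passing to the limit we would see two copies of a large piece of $\Gamma^{n-1}$ sitting on both sides of the basepoint with only deep connections, yielding either two ends or super-linear growth, both impossible for a half-line. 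This forces $\mu$ to asymptotically track the spine of some directed generator in $D$, and since by \cref{lemma:spine is eventualy periodic} these spines are eventually periodic and finite in number, the set of admissible $\mu$ is finite.

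Third, I would perform the orbit count. Identify the admissible rays with finitely many eventually periodic strings obtained from the spines of $D$ and their sections. Using level-transitivity, all vertices on a common level are identified, so the residual orbit structure on the boundary is captured by the asymptotic (periodic) tail of $\mu$ together with the ``direction'' along the half-line in which $\mu$ sits. Since a half-line has exactly one end and a two-sided bi-infinite line has two, the admissible periodic tails split into at most two orbit classes, corresponding in the limit to the distinguished end of $\Gamma_\mu$. The main obstacle is this last step: making the orbit counting rigorous requires a delicate combinatorial argument on the substitution rule, matching the spines of sections of $D$ with the symmetry of a half-line, in the spirit of the end-analysis of \cite{BDN2017,Bondarenko2007thesis}; the preceding finiteness is comparatively routine, but the drop from ``finite'' to ``at most two'' is where all the self-similar structure is used.
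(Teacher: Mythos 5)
Your proposal takes a genuinely different route from the paper, but it has real gaps that prevent it from establishing the theorem. The most serious one is the step you yourself flag: the entire content of the theorem is the bound ``at most two'', and your argument reduces this to ``a delicate combinatorial argument on the substitution rule'' that is never supplied. Nothing in your second step explains why the admissible tails could not fall into three or four orbit classes; the appeal to ``a half-line has one end and a line has two'' does not connect to the orbit count, since distinct orbits give rise to \emph{distinct} Schreier graphs, each separately required to be a half-line --- there is no single two-ended object whose symmetry you can invoke. A secondary but genuine error: the conclusion of your second step, ``the set of admissible $\mu$ is finite'', is false. The set $\{\mu \mid \Gamma_\mu \text{ is QI to a half-line}\}$ is a union of orbits, and each such orbit is infinite whenever $\Gamma_\mu$ is infinite (e.g.\ for the infinite dihedral group it is the full, infinite, orbit of $1^\infty$). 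What you presumably mean is that these rays fall into finitely many cofinality classes, but that is essentially what has to be proved, and ``$\mu$ asymptotically tracks the spine of some directed generator'' does not pin down a cofinality class, since infinitely many pairwise non-cofinal rays can share an eventual tail behaviour. Finally, the inflation/substitution decomposition of $\Gamma^n$ into $d$ glued copies of $\Gamma^{n-1}$ with uniformly bounded connecting sets, and the dichotomy ``two ends or super-linear growth'', are asserted rather than proved and are not available off the shelf at this level of generality.

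For contrast, the paper's proof is a short contradiction argument that avoids all of this structure theory. Assuming three distinct orbits of $\xi,\eta,\phi$ each with half-line Schreier graphs, it considers the vertices $\xi_{y,n} = y_1\cdots y_n a_{n+1}a_{n+2}\cdots$ and $\xi_{z,n} = z_1\cdots z_n a_{n+1}a_{n+2}\cdots$, which are cofinal with $\xi$ and hence lie in $\Gamma_\xi$. Local convergence of $(\Gamma^n, a_1\cdots a_n)$ to $(\Gamma_\xi,\xi)$, together with the divergence of the prefix distances between the three orbits, shows that these two sequences escape to infinity in $\Gamma_\xi$, stay far from each other, and each locally resembles the basepoint neighbourhoods of $\Gamma_\eta$ and $\Gamma_\phi$ (hence of a half-line); this produces two ends in $\Gamma_\xi$, contradicting that $\Gamma_\xi$ is itself QI to a half-line. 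Note how the number two arises there: it is the maximum number of ``extra'' divergent directions a one-ended graph can fail to accommodate, via a three-orbit pigeonhole --- exactly the mechanism your write-up is missing. If you want to salvage your approach, you would need to either supply the omitted combinatorial argument in full or switch to a counting-by-contradiction scheme of this kind.
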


\begin{proof}
Suppose, by contradiction, that there are at least three Schreier graphs quasi-isometric to a half-line, say corresponding to the orbits of $\xi,\eta, \phi$.

Let $\xi=a_1a_2\cdots$ as before,  $\eta=y_1y_2\cdots$ and $\phi=z_1z_2\cdots$, and denote by $d(-,-)$ the geodesic distance in $\Gamma^n$ or in $\Gamma_{\xi}$. Since $\xi$, $\eta$ and $\phi$ lie in different orbits, the distances between their prefixes must diverge as $n\to\infty$.

For each $n$, consider the vertices
$$
\xi_{y,n}:=y_1y_2\cdots y_na_{n+1}a_{n+2}\cdots \quad\text{and}\quad \xi_{z,n}:=z_1z_2\cdots z_na_{n+1}a_{n+2}\cdots.
$$
These vertices belong to $\Gamma_\xi$ for all $n$, since they are cofinal with $\xi$ (see \cite{BDN2017}). By the definition of convergence above, and using the divergence of prefixes discussed in the previous paragraph, we have
$$
d(\xi,\xi_{y,n}) \rightarrow \infty,\qquad d(\xi,\xi_{z,n}) \rightarrow \infty,\qquad d(\xi_{y,n},\xi_{z,n}) \rightarrow \infty \quad \text{as } n \rightarrow \infty,
$$
and the subgraphs induced by $\{\xi_{y,n}\}_{n\in\N}$ and $\{\xi_{z,n}\}_{n\in\N}$ inside $\Gamma_\xi$ are quasi-isometric to a half-line. Consequently, $\Gamma_\xi$ must have at least two ends, which yields a contradiction.
 \end{proof}

Self-similar bounded automata groups for which almost all Schreier graphs have two ends are listed in \cite{BDN2017}. In the case of binary alphabet $X$, these correspond to automata that appear in \cite{Sunic2007}, including the first Grigorchuk group.
To it, we can apply \cref{prop:torsion-not-CF} which applies, more generally, to any bounded automata torsion group. To start, we need the following lemma. 

\begin{lemma}\label{lem:regular-geodesics}
Let $G$ be a finitely generated bounded automata group and let $\eta$ be an infinite ray in $\partial \Td$. If the Schreier graph $\Gamma_\eta$ is context-free, then the set of words one can read on geodesics in the graph starting from $\eta$ is a regular language.
\end{lemma}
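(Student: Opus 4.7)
The plan is to exploit the context-free assumption on $\Gamma_\eta$, which by definition gives only finitely many end-cone types $T_1,\ldots,T_k$, and to build from them a deterministic finite automaton $\mathcal{A}$ recognising the set of labels of geodesic paths starting at $\eta$. Since $X$ is finite, $\Gamma_\eta$ has bounded vertex degree, so each end-cone has a finite frontier; the (finite) state set $Q$ of $\mathcal{A}$ will be the set of pairs $(T,v)$, taken up to label-preserving isomorphism sending frontier to frontier and $v\mapsto v$, where $T$ is an end-cone type and $v\in\Delta(T)$ is a frontier vertex. The initial state is $(\Gamma_\eta,\eta)$, viewing $\Gamma_\eta$ as its own end-cone with frontier $\{\eta\}$, and every state is declared accepting, since every prefix of a geodesic is itself a geodesic.

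The transition function will be defined as follows: from state $(T,v)$ on reading a letter $x\in X$, we look at the (unique) $x$-neighbour $v'$ of $v$ in the Schreier graph. The transition is defined precisely when $d_\eta(v')=d_\eta(v)+1$, in which case $v'$ lies in $T\setminus\Delta(T)$ and the needed data is visible inside $T$ itself. One then sets $T'$ to be the connected component of $T\setminus\Delta(T)$ containing $v'$; this coincides with the end-cone $\Gamma_\eta(v')$ and satisfies $v'\in\Delta(T')$, so the target state $(T',v')$ is a valid element of $Q$. Correctness is then immediate: a word $x_1\cdots x_n$ labels a geodesic from $\eta$ if and only if each $x_i$ strictly increases distance from $\eta$, which is exactly the condition for $\mathcal{A}$ to produce a successful run.

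The main obstacle is verifying that the transitions of $\mathcal{A}$ are well-defined at the level of isomorphism classes of pointed end-cones rather than on specific vertices. This amounts to showing that if $(T_1,v_1)\cong(T_2,v_2)$ via a label- and frontier-preserving isomorphism sending $v_1\mapsto v_2$, then the induced target states under any fixed letter $x$ also agree up to the same kind of isomorphism. This should follow from the combinatorial invariance of $\Gamma_\eta(v)$ and its frontier under such isomorphisms of the ambient end-cone, but it requires some care because the definition of end-cone type in the excerpt only requires frontiers to match setwise; carrying the distinguished frontier vertex $v$ as part of the state is what makes the construction compatible with this finer pointed invariant.
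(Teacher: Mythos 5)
Your construction is essentially identical to the paper's: the automaton has states given by pairs (end-cone type, frontier point), initial state the whole graph pointed at $\eta$, all states accepting, and transitions that follow an edge from a frontier point into the cone minus its frontier. The only differences are presentational --- you phrase the automaton deterministically and explicitly flag the well-definedness of transitions on isomorphism classes of pointed end-cones (which does hold, since for $w$ in an end-cone $\Gamma_\eta(v)$ one has $d(\eta,w)=|v|+d(w,\Delta_\eta(v))$, so the distance stratification is intrinsic to the labelled pair $(\Gamma_\eta(v),\Delta_\eta(v))$), a point the paper's proof passes over silently.
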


\begin{proof}
We recall that, by definition, $\Gamma_\eta$ has finitely many end-cone types. All we have to do is to construct a finite state automaton that reads geodesics. The states are of the form $(C,\zeta)$ where $C$ is an end-cone type and $\zeta$ ranges in the frontier points of a given end-cone of type $C$. Note that the set of states is finite since the number of end-cone types is finite, and the number of frontier points for a given end-cone is finite too.
The initial state is $(C_{0},\eta)$ with $C_{0}$ the end-cone type of the base vertex $\eta$ and all the states are final. We then add a transition $(C_1,\zeta_1)\xrightarrow{a} (C_2,\zeta_2)$ if there exists an edge, labelled with $a$, from the frontier point corresponding to $\zeta_1$ of an end-cone of type $C_1$, to a point that corresponds to $\zeta_2$ which belongs to $\Gamma_\eta (\zeta_1) \setminus \Delta_{\eta} (\zeta_1)$.
Moreover, this vertex $\zeta_2$ is a frontier point of an end-cone of type $C_2$. 
\end{proof}

\begin{proposition}\label{prop:torsion-not-CF}
Let $G$ be an infinite finitely generated torsion bounded automata group, and let $\eta$ be an infinite ray. If the Schreier graph of $\eta$ is infinite, then $\WP(G,X,\Stab(\eta))$ is not context-free.
\end{proposition}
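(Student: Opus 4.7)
\medskip

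\noindent\textbf{Proof plan.}
The plan is to argue by contradiction, using the pumping lemma on geodesics and the fact that every element of a torsion group has finite order. Suppose, for contradiction, that $L = \WP(G,X,\Stab(\eta))$ is context-free. As observed in the paragraph preceding \cref{thm:Sch-not-CF}, this implies that the rooted Schreier graph $\Gamma_\eta$ is itself context-free, i.e.\ has finitely many end-cone types. By \cref{lem:regular-geodesics}, the set $L_g \subseteq X^*$ of words one can read along geodesic paths in $\Gamma_\eta$ starting at $\eta$ is then a regular language.

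Since $\Gamma_\eta$ is infinite and locally finite, it contains vertices at arbitrarily large distance from $\eta$, so $L_g$ contains words of unbounded length. Applying the pumping lemma to $L_g$ I would obtain a decomposition $u,v,w \in X^*$ with $|v|\geq 1$ such that $u v^i w \in L_g$ for every $i \geq 0$. In particular, for every $i$, the word $u v^i w$ labels a geodesic from $\eta$ to the vertex $\eta \cdot \overline{u v^i w}$, so
\[
    d_{\Gamma_\eta}\!\bigl(\eta,\ \eta \cdot \overline{u v^i w}\bigr)
    \;=\;
    |u| + i|v| + |w|.
\]

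Now I would use torsion. The element $\overline{v} \in G$ has finite order $N\geq 1$, so $\overline{v}^N = 1$ in $G$, and therefore $\overline{u v^N w} = \overline{uw}$ as group elements. Consequently $\eta \cdot \overline{u v^N w} = \eta \cdot \overline{uw}$. But the word $uw$ already labels a path from $\eta$ to this same vertex of length $|u|+|w|$, so
\[
    d_{\Gamma_\eta}\!\bigl(\eta,\ \eta \cdot \overline{u v^N w}\bigr)
    \;\leq\;
    |u|+|w|
    \;<\;
    |u| + N|v| + |w|,
\]
contradicting the previous displayed equality with $i=N$. This contradiction forces $\WP(G,X,\Stab(\eta))$ to fail to be context-free. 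I do not anticipate a serious obstacle: the only non-trivial ingredients are \cref{lem:regular-geodesics} and the remark that context-freeness of $\WP$ transfers to context-freeness of $\Gamma_\eta$ (already recorded in the paper), after which the argument is just pumping combined with finite order.
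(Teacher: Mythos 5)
Your proposal is correct and follows essentially the same route as the paper's own proof: pass from context-freeness of $\WP(G,X,\Stab(\eta))$ to context-freeness of $\Gamma_\eta$, invoke \cref{lem:regular-geodesics} to get regularity of the geodesic language, pump to obtain geodesics $uv^iw$ of unbounded length, and contradict torsion because $uv^Nw$ (with $N$ the order of $\overline{v}$) reaches the same vertex as the strictly shorter word $uw$. Your write-up merely spells out the final distance comparison that the paper leaves implicit.
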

\begin{proof}
If $\WP(G,X,\Stab(\eta))$ is context-free, then so is the corresponding Schreier graph. Thus, from \cref{lem:regular-geodesics} we know that the language of all geodesics in this graph is regular.
Applying the pumping lemma for regular languages (see, e.g.\@ \cite[Theorem 1.70]{S2013}), we see that this language of geodesics contains some sub-language $\{xy^nz\mid n\in \mathbb N \}$  where $y \in X^*$ is a non-empty word.
This contradicts our assumption that $G$ is torsion, since $xy^kz$ cannot be a geodesic when $k$ is the order of the element given by $y$.
\end{proof}

To summarise, combining \cref{thm:Sch-not-CF} with \cref{thm: Sch-not-half-line} we show that there are finitely generated bounded automata groups such that almost surely the Schreier graphs are not context-free. An interesting example of a group where almost all Schreier graphs are one-ended trees but are not context-free, is the following.

\medskip
\noindent\textit{\underline{Iterated monodromy group of $z^2+i$}} (see \cite{BDN2017}). Using the notation introduced in \cref{sec:bounded-automata-groups}, the group is generated by the three automorphisms
$$ a=(b,c),\ \ b=(1,1) \cdot s,\ \ c=(a,1)$$
where $1 \neq s \in \mathrm{Sym}(\{0,1\})$.
In this case, it is easy to see that the trees are not quasi-isometric to half-lines. Indeed, 
if $\eta=c_1c_2\cdots$ is an infinite ray, the graph $\Gamma_{\eta}$ contains vertices $v_k:=0^kc_{k+1}\cdots$. By an inductive argument, one can show that from each $v_k$ there is a path to $1^kc_{k+1}\cdots$ and a  different path to $1^{k-2}00c_{k+1}\cdots$. In particular, if the graph is one-ended, then it is a half-line with infinitely many paths of unbounded lengths attached, and it is easy to see that such a graph does not have finitely many end-cone types, and so is not context-free.

\medskip

Our analysis above is based on the number of ends in a typical Schreier graph, that is, a Schreier graph from a subset of $\partial \Td$ of measure zero. But our \cref{thm:main} concerns infinite rays that are periodic, and such rays form a subset of $\partial \Td$ of measure zero. Therefore, a stronger version of the theorem where  ``ET0L'' would be replaced with ``context-free'' might still be possible. Below, we will use \cref{thm:Sch-not-CF} and \cref{prop:torsion-not-CF} to provide some examples where this is not the case, as all (and not only almost all) the Schreier graphs are not context-free.
A useful result here is Theorem~11~in~\cite{BDN2017} which provides a criterion to determine whether all the Schreier graphs are one-ended.

\medskip
\noindent\textit{\underline{Hanoi tower group on three pegs}} (see \cite{GS2006}). It is known that any Schreier graph of this group is one-ended and not quasi-isometric to the half-line (e.g.~Remark~3~in~\cite{BDN2017}), hence any $\WP(G,X,\Stab(\eta))$ is not context-free.

\medskip
\noindent\textit{\underline{Basilica group}}.
We can directly apply the theorem to show that the language $\WP(G,X,\Stab(\eta))$ cannot be context-free for the Basilica group, since
its Schreier graphs are fully classified in \cite{DDMN2010}. Namely, all Schreier graphs are one-ended, two-ended or four-ended. In fact, the latter case is a single exception.
If the Schreier graph has one or two ends, by the classification it is not quasi-isometric to a tree. 
On the other hand, for the case of the four-ended graph, it is clear that it does not have finitely many end-cone types (see \cite[Theorem 4.6 and Figure 7]{DDMN2010}).

\medskip
\noindent\textit{\underline{First Grigorchuk group}}. In this example almost all graphs are two-ended graphs. Namely, it has just one one-ended Schreier graph,  the one containing $1^{\infty}$, the rightmost point in the boundary. 
In this case, we use \cref{prop:torsion-not-CF} to conclude the non-context-freeness.

\medskip

We end the section with two examples. One, for which \cref{thm:main} can indeed be  strengthened, and $\WP(G,X,Stab(\eta))$ is context-free. And one where we do not know whether $\WP(G,X,\Stab(\eta))$ is context-free.

\medskip
\noindent\textit{\underline{Infinite dihedral group}}. This group can be seen as the self-similar group generated by the automorphisms $$ a=(1,1) \cdot s,\ \ b=(a,b),$$
where $1 \neq s \in \mathrm{Sym}(\{0,1\})$ or, equivalently by the bounded automaton in \cref{fig:dihedral-automaton}.
One Schreier graph is one-ended (containing $1^{\infty}$), while all the others are isomorphic to the Cayley graph of $\mathbb{Z}$, an infinite line, see \cref{fig:dihedral-schreier}. Thus, all the Schreier graphs are context-free. 

\medskip

\begin{figure}[!ht]
	\centering	
\begin{tikzpicture}[->,>=stealth,shorten >=1pt,auto,node distance=3.5cm,on grid]
   \node[state] (a) [] {$a$}; 
   \node[state] (b) [left=of a]  {$b$}; 
   \node[state] (1) [right=of a]  {$1$}; 
       \path[->] 
       (a) edge [bend left, above] node {$(0,1)$} (1) 
           edge [bend right, below] node {$(1,0)$} (1);
       \path[->]  
       (b)  edge node {$(0,0)$} (a)
            edge [loop left] node {$(1,1)$} (); 
        \draw (1) to [out=60,in=30,looseness=8] node [above] {$(0,0)$} (1);
        \draw (1) to [out=330,in=300,looseness=8] node [below] {$(1,1)$} (1);
\end{tikzpicture}
\caption{Automaton of the infinite dihedral group.}
	\label{fig:dihedral-automaton}
\end{figure}
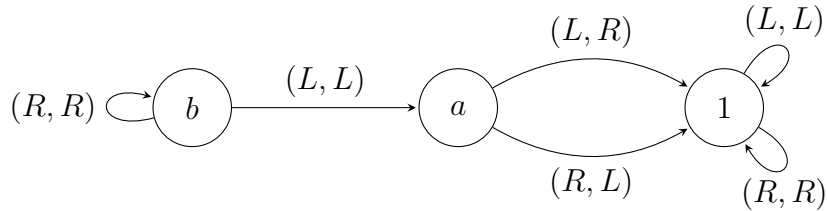
\begin{figure}[!ht]
	\centering	
 \begin{tikzpicture}
 
 \begin{scope}[every loop/.style={min distance=12mm}]
\draw (-3,0) -- node[above]{$a$} (-2,0)  -- node[above]{$b$} (-1,0) -- node[above]{$a$} (0,0) -- node[above]{$b$}  (1,0) -- node[above]{$a$} (2,0) -- node[above]{$b$} (3,0) ;

\node at (-3,0) () [above]{$1^{\infty}$};
\draw(-3,0) edge[loop left,left] node {$b$} ();
\draw[dashed] (3,0) -- (4,0);

\fill (-3,0) circle (0.35ex);
\fill (-2,0) circle (0.35ex);
\fill (-1,0) circle (0.35ex);
\fill (0,0) circle (0.35ex);
\fill (1,0) circle (0.35ex);
\fill (2,0) circle (0.35ex);
\fill (3,0) circle (0.35ex);
\end{scope}

\begin{scope}[yshift=-40]
\draw (-3,0) -- node[above]{$a$} (-2,0)  -- node[above]{$b$} (-1,0) -- node[above]{$a$} (0,0) -- node[above]{$b$}  (1,0) -- node[above]{$a$} (2,0) -- node[above]{$b$} (3,0) ;

\node at (-3,0) () [above left]{$\eta$};
\draw[dashed] (-4,0) -- (-3,0);
\draw[dashed] (3,0) -- (4,0);

\fill (-3,0) circle (0.35ex);
\fill (-2,0) circle (0.35ex);
\fill (-1,0) circle (0.35ex);
\fill (0,0) circle (0.35ex);
\fill (1,0) circle (0.35ex);
\fill (2,0) circle (0.35ex);
\fill (3,0) circle (0.35ex);
\end{scope}
\end{tikzpicture}
\caption{Isomorphism classes of Schreier graphs in the infinite dihedral group.}
	\label{fig:dihedral-schreier}
\end{figure}
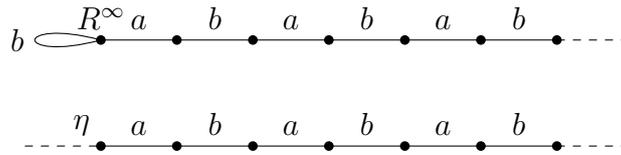

\medskip
\noindent\textit{\underline{Grigorchuk group $G_{\overline{01}}$}} (see, e.g.~\cite{grigorchuk1984}). It is well known that this is a self-similar bounded automata group containing non-torsion elements and all the Schreier graphs are quasi-isometric to a line or a half-line. However, we do not know whether such graphs are context-free or not.

\section{Further Research} \label{sec:future}

In this paper we showed that in a finitely generated group $G\leq \AutTd$ the membership problem $\WP(G,X,\Stab(\eta))$ in stabilisers of infinite eventually periodic rays $\eta\in \partial \mathcal{T}_d$ is an ET0L language. It turns out that the word problem $\WP(G,X)$
can be characterised in terms of these languages, as explained in the following proposition. 

\begin{proposition}
 The word problem of a group $G \leq \AutTd$ coincides with the intersection of all $\WP(G,X,\Stab(\eta))$ with $\eta$ periodic.   
\end{proposition}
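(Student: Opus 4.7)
The plan is to prove the two inclusions of
\[
    \WP(G,X) \;=\; \bigcap_{\eta \text{ periodic}} \WP(G,X,\Stab(\eta))
\]
separately, and the proof should be short.

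The forward inclusion is immediate from the definitions. If $w \in \WP(G,X)$ then $\overline{w} = 1$, so $\overline{w}$ acts trivially on $\Td$ and, by continuity, on $\partial \Td$. In particular, $\overline{w}$ fixes every ray $\eta \in \partial \Td$, so $w \in \WP(G,X,\Stab(\eta))$ for every periodic ray $\eta$ (and indeed for every ray).

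For the reverse inclusion, fix $w \in X^*$ and suppose $\overline{w} \in \Stab(\eta)$ for every periodic ray $\eta \in \partial \Td$. The plan is to show that $\overline{w}$ fixes every vertex of $\Td$, which suffices since an automorphism of a rooted tree is trivial if and only if it fixes every vertex. Two elementary observations do the work. First, if an automorphism $\alpha \in \AutTd$ fixes the ray $\eta = c_{i_1} c_{i_2} \cdots$, then it fixes every prefix $c_{i_1} c_{i_2} \cdots c_{i_k}$, because $\alpha$ preserves levels and the prefix of $\alpha(\eta)$ of length $k$ equals $\alpha(c_{i_1} \cdots c_{i_k})$. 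Second, every vertex $v \in C^*$ occurs as a prefix of a periodic ray: if $v = \varepsilon$ take any periodic ray, and otherwise take $\eta_v = v^\omega = v v v \cdots$, which is periodic with period $|v|$.

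Combining these two observations, for an arbitrary vertex $v$, the ray $\eta_v$ is periodic, so by hypothesis $\overline{w}$ fixes $\eta_v$, and therefore fixes its prefix $v$. Since $v$ was arbitrary, $\overline{w}$ fixes every vertex of $\Td$, hence $\overline{w} = 1$ and $w \in \WP(G,X)$. There is no real obstacle here; the only thing to be careful about is the convention for the word ``periodic'' (pure versus eventually periodic), but either convention works since the construction $v \mapsto v^\omega$ produces a purely periodic ray already.
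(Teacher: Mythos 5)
Your proof is correct and follows essentially the same route as the paper's: the forward inclusion is immediate, and for the reverse inclusion the key point in both arguments is that fixing a ray forces fixing all its prefixes, and that every vertex $v$ is a prefix of the periodic ray $v^\omega$, so fixing all periodic rays forces fixing all vertices and hence (by faithfulness of the action of $\AutTd$ on the tree) triviality. Your write-up merely makes explicit the step "every finite word is a prefix of some periodic ray" that the paper leaves implicit.
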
\label{lem:stab-wp}

\begin{proof}
It is straightforward that any $\WP(G,X,\Stab(\eta))$ contains the word problem. On the other side, the action on $\mathcal{T}_{d}$ is faithful. So, if an element stabilises all the vertices of the tree, then it is the identity. 
Now observe that if $w \in \WP(G,X,\Stab(\eta))$, then $w$ stabilises all the prefixes of $\eta$.
Take  $w$ in the intersection of all $\WP(G,X,\Stab(\eta))$ with $\eta$ periodic. This means that $w$ stabilises all possible finite words in $C^*$ and hence it is the identity.
\end{proof}

The immediate corollary that the word problem of a bounded automata group is an intersection of infinitely many ET0L languages does not in itself say much, as it is well known that any language is an intersection of (infinitely many) regular languages. But it motivates the following natural question.

\begin{question}
Is it true that the word problem of a bounded automata group is a finite intersection of ET0L languages?
\end{question}

In \cref{sec:not-CF}, we proved that the languages $\WP(G,X,\Stab(\eta))$ are not context-free under some additional hypothesis. We also mentioned there one  example, a non-torsion group from Grigorchuk's family, $G_{\overline{01}}$, for which we do not know whether these languages are context-free or not. We think, it is not, and we ask the following.

\begin{question}
Is there  a non-virtually free self-similar bounded automata group with $\WP(G,X,\Stab(\eta))$ context-free?
\end{question}

We are also interested to know if $\eta$ is computable from $\WP(G,X,\Stab(\eta))$. That is, if the language uniquely determines the ray.

\begin{question} Let $G$ be the first Grigorchuk group, and let  $\eta$ be the word $0101^2 01^3 01^4 \ldots$. Is it true that the subgroup membership problem $\WP(G,X,\Stab(\eta))$ is not ET0L?
\end{question}

The statements \cref{thm:gfun,thm:main} put together give us a characterisation of the generating function for $\WP(G,X,\Stab(\eta))$, when $\eta$ is an  eventually periodic infinite word in the alphabet $C$.
There are certain subclasses of \emph{indexed languages} that have known characterisations of their generating functions with potential closed-form expressions (see~\cite{adams2013}).

\begin{question}
    For what bounded automata group and infinite rays does the language $\WP(G,X,\Stab(\eta))$ belong to the subclasses of indexed languages as studied in~\cite{adams2013}?
\end{question}

\section*{Acknowledgements}
The first-named author and the fifth-named author acknowledge support from the Swiss Government Excellence Scholarship.
The first-, fourth- and fifth-named authors acknowledge support from Swiss NSF grant 200020-200400.
The second-, third-, fifth- and sixth-named authors are members of the Gruppo Nazionale per le Strutture Algebriche, Geometriche e le loro Applicazioni (GNSAGA) of the Istituto Nazionale di Alta Matematica (INdAM). 
The third-named author is also a member of the PRIN 2022 ``Group theory and its applications'' research group and gratefully acknowledges the support of the PRIN project 2022-NAZ-0286, funded by the European Union - Next Generation EU, Missione 4 Componente 1 CUP B53D23009410006, PRIN 2022 - 2022PSTWLB - Group Theory and Applications. The third-named author also gratefully acknowledges the support of the Universit\`a degli Studi di Milano--Bicocca
(FA project 2021-ATE-0033 ``Strutture Algebriche'').
The fifth-named author acknowledges support from the Grant QUALIFICA by Junta de Andalucía grant number QUAL21 005 USE and from the research grant PID2022-138719NA-I00 (Proyectos de Generación de Conocimiento 2022) financed by
the Spanish Ministry of Science and Innovation.
The first-named author thanks the Dipartimento di Matematica e Applicazioni of the Universit\`a di Milano-Bicocca for their hospitality.
The fifth-named author also thanks the Section de mathématiques of the
Université de Genève for their hospitality.
We are grateful to Murray Elder for helpful discussions and to the anonymous referees for useful comments on the first draft of the paper.

\bibliographystyle{plain}
\bibliography{main}

\end{document}